\numberwithin{equation}{section}  
\newtheorem{theorem}{Theorem}[section]  
\newtheorem{lemma}[theorem]{Lemma}  
\newtheorem{corollary}[theorem]{Corollary}
\theoremstyle{definition}  
\newtheorem{definition}[theorem]{Definition}
\newtheorem{remark}[theorem]{Remark}  
\newtheorem{example}[theorem]{Example}
\newcommand{\A}{\mathcal{A}}
\newcommand{\F}{\mathbb{F}}
\newcommand{\R}{\mathbb{R}}
\newcommand{\Z}{\mathbb{Z}}
\begin{document}  
    
%%%%%%%%%%%%%%%%%%%%%%%%%%%%%%%%%%%%%%%%%%%%%%%%%%%%%%%%%%%%%%%%%%%%%%%%%%%%%%%%%%%  
   
\title{Zero-nonzero patterns for nilpotent matrices over finite fields}

\author{Kevin N. Vander Meulen}  
\address{Department of Mathematics, Redeemer University College, Ancaster, ON
L9K 1J4 Canada}
\email{kvanderm@redeemer.ca}
\urladdr{http://cs.redeemer.ca/math/kvhome.htm}

\author{Adam Van Tuyl}  
\address{Department of Mathematical Sciences \\  
Lakehead University \\  
Thunder Bay, ON P7B 5E1, Canada}  
\email{avantuyl@lakeheadu.ca}  
\urladdr{http://flash.lakeheadu.ca/$\sim$avantuyl/}  
  
\keywords{zero-nonzero patterns, nilpotent matrices, ideal saturation, Gr\"obner basis, finite fields}  
\subjclass[2000]{15A18, 13P10, 05C50, 11T06}  
\thanks{Version: \today}  

\begin{abstract}
Fix a field $\F$.  A zero-nonzero pattern $\A$ is said to be potentially
nilpotent over $\F$ if there exists a matrix with entries in $\F$ with
zero-nonzero pattern $\A$ that allows nilpotence.  In this paper we initiate 
an investigation into which zero-nonzero patterns are potentially nilpotent
over $\F$, with a special emphasis on the
case that $\F = \Z_p$ is a finite field.  
As part of this investigation, we develop methods,
using the tools of algebraic geometry and commutative algebra, 
to eliminate zero-nonzero patterns $\A$
as being potentially nilpotent over any field $\F$.
We then use these techniques to classify all 
irreducible zero-nonzero patterns of order two
and three that are potentially nilpotent over $\Z_p$ for each prime $p$.
\end{abstract}
   
\maketitle  
  
%%%%%%%%%%%%%%%%%%%%%%%%%%%%%%%%%%%%%%%%%%%%%%%%%%%%%%%%%%%%%%%%%%%%%%%%%%%%%%  
  
\section{Introduction}  

A {\it zero-nonzero (znz) pattern} $\A$ is a square matrix whose entries come from
the set $\{*,0\}$ where $*$ denotes a nonzero entry.  Fix a field $\F$. 
We then set 
\[Q(\A,\F) = \{ A \in M_n(\F) ~|~ (A)_{i,j} \neq 0 \Leftrightarrow 
(\A)_{i,j} = * ~~\mbox{for all $i,j$}\}.\]
The set $Q(\A,\F)$, sometimes denoted $Q(\A)$ when $\F$ is known, is usually called the {\it qualitative class} of $\A$.
An element $A \in Q(\A,\F)$
is called a {\it matrix realization} of $\A$.  
 A znz-pattern $\A$ is said to be {\it potentially
nilpotent over} $\F$ if there exists a matrix $A \in Q(\A,\F)$ such that 
$A^k = 0$ for some positive integer $k$.  
In this paper we study the question of what patterns $\A$ are potentially
nilpotent over a field $\F$.  Although we will present some results for arbitrary fields,
we are particularly interested in the case that $\F$ is a finite field.

One motivation to study this question is to provide a first step
in understanding spectrally arbitrary patterns %, as first introduced in \cite{DJOvdD},
in the context of fields other than $\R$.  An $n \times n$
znz-pattern $\A$ is a {\it spectrally
arbitrary pattern} (SAP) if given any monic polynomial $p(x)$ of degree $n$
with coefficients in $\F$, there exists
a matrix $A \in Q(\A,\F)$ whose characteristic polynomial is $p(x)$.
There is a growing body of literature (see, for example, \cite{Britz, CM,DHHMMPSV, P, BS} and their references)
interested in identifying patterns that are SAPs
when $\F = \R$ (with much focus on \emph{sign} patterns: patterns whose entries come from the set $\{ +,-,0\}$).  
However, little work has been done on this question when $\F$ is field different from $\R$.
Since any SAP is automatically potentially nilpotent, 
the topic of this paper can be seen as a step in identifying patterns which could be SAPs.
Additional work on 
the problem of SAPs over finite fields
is under development by E. Bodine~\cite{B}. %\footnote{

We now cursorily survery the problem of identifying potentially nilpotent patterns over $\F = \R$.
%has received some attention, which we now cursorily survey.   
Determining when a sign pattern is potentially nilpotent was listed
as an open problem in \cite{EJ}.
Potentially nilpotent star sign patterns were introduced
in \cite{Y} and fully characterized in \cite{MTvdD}.  
Potentially nilpotent sign patterns of order up to 3 were
characterized by \cite{EL}.  Included in \cite{EL} is an investigation of
sign patterns that allow nilpotence of index $2$, where the index
of matrix $A$ is the smallest integer $k$ such that $A^k = 0$;  this was later extended
to index $3$ in \cite{GLS} (see also \cite{Br}).  %In \cite{BOvdD}, the authors describe a family of sign patterns that are potentially nilpotent.  
In \cite{P}, it was shown that all potentially nilpotent full sign
patterns (i.e. patterns with no zero entries) are also 
SAPs.  Consequently, recent work \cite{KOvdDvdHVM} presents constructions
of potentially nilpotent full sign patterns.  
Much work in determining when a pattern
is potentially nilpotent occurs in the literature on SAPs.
Identifying potentially nilpotent patterns over $\R$ is
in part an important subproblem in the study of SAPs due to
a technique developed in \cite{DJOvdD}, usually referred to as the Nilpotent-Jacobi Method.
Roughly speaking, if $A$ is a nilpotent realization over $\R$ of a pattern $\A$,
then one can determine if $\A$ is spectrally arbitrary by evaluating the
entries of $A$ in a Jacobian matrix constructed from $\A$.  Note that this technique
requires the Implicit Function Theorem, which holds over $\R$, so one
should not expect a generalization of this
approach to arbitrary fields.

We begin in Section~\ref{basic} by reviewing some basic
results concerning nilpotent matrices over a field $\F$.  Many of the results
that are known to hold in $\R$ continue to hold over an arbitrary field.

In  Section~\ref{ideal} we introduce some techniques to eliminate certain
patterns as being potentially nilpotent over a field.  
We use some tools from commutative
algebra and algebraic geometry to carry out this program.  
Starting with a znz-pattern $\A$ with nonzero entries
at $(i_1,j_1),\ldots,(i_t,j_t)$, we define an ideal $I_{\A}$ in a 
polynomial ring $R_{\A} = \F[z_{i_1,j_1},\ldots,z_{i_t,j_t}]$ over the field $\F$.
In Theorem \ref{characterization} we show that  $\A$ is potentially
nilpotent over a field $\F$ if and only if a certain subset 
of the affine variety defined by $I_{\A}$ is nonempty.  With this characterization,
we can use the technique of ideal saturation (see Definition \ref{satdef})
to determine if a given pattern is {\it not}  potentially nilpotent:

\noindent
{\bf Theorem \ref{saturation}.} {\it
Let $\F$ be any field and
$\A$ a znz-pattern.
Let $J = (z_{i_1,j_1}\cdots z_{i_t,j_t})$ be the ideal generated by
the product of the variables of $R_{\A}$.  If $1 \in I_{\A}:J^{\infty}$,
then $\A$ is not potentially nilpotent over any extension of $\F$. 
}

Since many computer algebra programs can compute the saturation of ideals, Theorem \ref{saturation}
promises to be a useful tool for future experimentation.  In the last part of Section~\ref{ideal}
we review the basics of Gr\"obner bases, and show how Gr\"obner bases can also
be used to eliminate znz-patterns as being potentially nilpotent (see Example \ref{converse}).

As an aside, we hope that our results, along with the 
work of Shader \cite{BS} and Kaphle \cite{K}, will highlight
the usefulness of techniques from commutative algebra and algebraic geometry
in the study of SAPs.  Shader uses a result about the number of algebraically independent
elements over the polynomial ring $\R[x_1,\ldots,x_n]$ to prove a lower bound
on the number of nonzero entries in a SAP.  Kaphle's MSc thesis uses Gr\"obner bases
to eliminate sign patterns as being potentially nilpotent.  Note that one difference between
our work and the work of Kaphle is that we use the 
equations constructed from the characteristic polynomials
when forming the Gr\"obner basis, while Kaphle uses equations constructed from
the traces of the matrices $\A^k$ for $k=1,\ldots,n$.

In Section~\ref{condition} we introduce a necessary condition for a znz-pattern
$\A$ to be nilpotent over a field $\F$.  Precisely,
we look at znz-patterns $\A$ where $\A$ is irreducible and the digraph
$D(\A)$ has no $2$-cycles (see Section~\ref{basic}).  When $\F = \R$, if $\A$
has at least two nonzero entries on the diagonal and
$\A$ is potentially nilpotent, then it is known (see \cite{DHHMMPSV}) that $D(\A)$ has to have
a $2$-cycle.  However, we show that this is no longer true over an arbitrary field.
What is important is that the polynomial $x^3-1$ factors completely
over $\F$.  In fact, we prove a more general result:

\noindent
{\bf Theorem \ref{nomcycle}.} {\it 
Let $\A$ be a znz-pattern with $m \geq 2$ nonzero entries
on the diagonal, and suppose that $D(\A)$ has no $k$-cycles
with $2 \leq k \leq m-1$.  If $\A$ is potentially nilpotent
over $\F$, then the polynomial $x^m-1$ factors into 
$m$ linear forms over $\F$.}

Our paper culminates with Section~\ref{small} which
uses the above techniques to classify all potentially nilpotent patterns of order at most three 
when $\F = \Z_p$ is the finite field with $p$ elements, where $p$ is a prime
(see Theorems \ref{2x2case} and \ref{3x3case}).  One 
interesting by-product of this classification is the discovery
that $\A$ may be potentially
nilpotent in a field $\F$, but a {\it superpattern} of $\A$, that is, a znz-pattern
$\A'$ such that $(\A')_{i,j} \neq 0$ whenever $(\A)_{i,j} \neq 0$, may
not be potentially nilpotent over the same field $\F$.

\noindent
{\bf Acknowledgments.}  Many of these results were inspired through
computer experiments using CoCoA \cite{C} and Macaulay 2 \cite{M2}.  The second author
thanks Redeemer University College for its hospitality while working
on this project, while both authors thank Natalie Campbell for helpful discussions.  Both authors 
were supported by an NSERC grant.

%%%%%%%%%%%%%%%%%%%%%%%%%%%%%%%%%%%%%%%%%%%%%%%%%%%%%%%%%%%%%%%%%%%%%%%%%%%%%%%

\section{Basic Properties}\label{basic}

In this section, we review some of the needed properties
of znz-pattern matrices and summarize some of the basic properties of potentially
nilpotent matrices over $\F$.  Some of these results were known when $\F = \R$;  we consider the 
more general case.  We continue to use the notation from the introduction.

When referring to elements of the field $\F$, we shall use $1_\F$ to denote
the multiplicative identity of $\F$, but
abuse notation slightly and write $0$ for the additive identity $0_\F$.  
For any positive integer $n \in \Z$,
we write $n_\F$ to denote $(1_\F + \cdots + 1_{\F})$ ($n$ times).  
Then $-n_\F$ will denote the additive inverse of $n_\F$ in $\F$,
and $n^{-1}_{\F}$ denotes the multiplicative inverse (provided $n_\F \neq 0$).

Given an $n \times n$ znz-pattern $\A$, we can construct a digraph
$D(\A) = (V,E)$ on the vertex set $V = [n]:= \{1,\ldots,n\}$,
whose edge set consists of the arcs $(i,j)$ whenever $(\A)_{i,j} \neq 0$.
We call the edge $(i,i)$ a loop;  loops correspond to the nonzero
entries on the diagonal of $\A$.  A simple cycle $\gamma$ of length $k$,
also called a $k$-cycle, is a sequence of $k$ distinct vertices
$\{i_1,\ldots,i_k\}$ such that $(i_1,i_2),(i_2,i_3),\ldots,(i_{k-1},i_k),(i_k,i_1) \in E$.  We sometimes
denote a $k$-cycle $\gamma$ by $(i_1,i_2,\ldots,i_k)$, and denote its length
by $|\gamma| = k$.  Furthermore, we say two cycles $\gamma_1$ and $\gamma_2$
are disjoint if they have no vertices in common.

Suppose that $A \in Q(\A,\F)$ is a realization of $\A$.  The characteristic
polynomial of $A$ can be described in terms of the cycles of $D(\A)$.  
Precisely, suppose that $\gamma = (i_1,\ldots,i_k)$ is a $k$-cycle.
We let $\prod(\gamma) = a_{i_1,i_2}a_{i_2,i_3}\cdots a_{i_k,i_1}$ where
$a_{i,j} = (A)_{i,j}$.  Then the characteristic polynomial of $A$ has the form
\[p_A(x) = x^n + r_1x^{n-1} + r_2x^{n-2} + \cdots + r_{n-1}x + r_n\]
where
\[r_i = (-1)^{i}\sum_{\begin{array}{c}
\gamma_1,\ldots,\gamma_p ~~\mbox{pairwise disjoint cycles} \\
|\gamma_1|+\cdots + |\gamma_p| = i
\end{array}}
\left[(-1)^{|\gamma_1|-1}\prod(\gamma_1) \cdots (-1)^{|\gamma_p|-1}\prod(\gamma_p)\right].\]

A znz-pattern $\A$ of order $n \geq 2$ is {\it reducible} if there exists some integer
$1 \leq r \leq n-1$ and a permutation matrix $P$ such that 
\[P\A P^T =
\begin{bmatrix}
\A_1 & \A_2 \\
0_{r,n-r} & \A_3 
\end{bmatrix}. \]
Otherwise, a znz-pattern $\A$ is called {\it irreducible}.  Equivalently,
a znz-pattern $\A$ is irreducible if and only if the associated digraph $D(\A)$ is
strongly connected, that is, there is a directed path between
any pair of distinct vertices.   The Frobenius normal form of $\A$ is a permutation similar block
upper triangular matrix whose diagonal blocks are irreducible.  The diagonal blocks
are called the irreducible components of $\A$.  

The final lemma of this section summarizes some of the results
we will need in the later sections.

\begin{lemma}  \label{irreduciblereduction}
Fix a field $\F$ and a znz-pattern $\A$.
\begin{enumerate}
\item[(a)]  Suppose that
$\A$ is reducible with irreducible components $\A_1,\ldots,\A_t$.  Then
$\A$ is potentially nilpotent over $\F$ if and only if each znz-pattern
$\A_i$ is potentially nilpotent.
\item[(b)] If $\A$ is potentially nilpotent over
$\F$, then so is $\A^{\mbox{T}}$, the transpose of $\A$.
\item[(c)] If $A$ is a nilpotent realization of $\A$, then
the characteristic polynomial of $A$ is $p_A(x) = x^n$.
\end{enumerate}
\end{lemma}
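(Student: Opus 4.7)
The three statements are largely bookkeeping facts about block structure, transposition, and characteristic polynomials, so my plan is to handle each part in turn, reserving the most care for (a).

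For part (a), my plan is to use the Frobenius normal form directly. By definition, some permutation matrix $P$ conjugates any $A\in Q(\A,\F)$ into a block upper triangular form whose diagonal blocks have patterns $\A_1,\ldots,\A_t$. For the forward direction, assume $A^k=0$. Since conjugation commutes with powers, the matrix $B := PAP^T$ is also nilpotent and block upper triangular; the diagonal blocks of $B^k$ are the $k$-th powers of the diagonal blocks of $B$, so each diagonal block is a nilpotent realization of the corresponding $\A_i$. For the converse, pick nilpotent realizations $A_i\in Q(\A_i,\F)$ of each irreducible component, and fill the off-diagonal blocks of the Frobenius form with any matrices having the correct zero-nonzero pattern dictated by $\A$ (nonzero entries can be taken to be $1_\F$ in any slot that must be nonzero). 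The resulting block upper triangular matrix $B$ has characteristic polynomial $\prod_i p_{A_i}(x) = \prod_i x^{n_i} = x^n$, so by Cayley--Hamilton $B^n=0$. Conjugating back by $P^T$ yields a nilpotent realization of $\A$. The one subtlety to watch is that $\A$ may be reducible into more than two blocks, but iterating or just observing that a block upper triangular matrix with nilpotent diagonal blocks is nilpotent handles this cleanly.

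Part (b) is immediate: if $A\in Q(\A,\F)$ satisfies $A^k=0$, then $(A^T)^k=(A^k)^T=0$, and $A^T\in Q(\A^T,\F)$ because transposition preserves the zero-nonzero locations (swapping rows and columns). Part (c) is also short: $A^k=0$ implies that the minimal polynomial $m_A(x)$ divides $x^k$, so $m_A(x)=x^j$ for some $j\le k$. Since the characteristic polynomial $p_A(x)$ is a monic polynomial of degree $n$ having exactly the same irreducible factors as $m_A(x)$ (a standard fact valid over any field), we get $p_A(x)=x^n$. Alternatively, one can argue by passing to the algebraic closure, where nilpotence forces all eigenvalues to be $0$.

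I expect part (a), specifically the converse direction, to be the only place needing any thought, since one must confirm that the off-diagonal blocks of the Frobenius form can be filled arbitrarily (subject to the zero-nonzero pattern of $\A$) without destroying nilpotence; the block upper triangular structure makes this transparent, so no computation beyond Cayley--Hamilton is needed.
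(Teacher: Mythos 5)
Your proof is correct. Note that the paper itself offers no proof of this lemma---it is stated as a summary of standard facts---and your argument (Frobenius normal form plus the observation that a block upper triangular matrix with nilpotent diagonal blocks is nilpotent for part (a), transposition commuting with powers for (b), and the minimal/characteristic polynomial or algebraic-closure argument for (c)) is exactly the standard reasoning the authors take for granted, including the one point that needs care: filling the off-diagonal blocks of the Frobenius form arbitrarily according to the pattern does not disturb nilpotence.
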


%%%%%%%%%%%%%%%%%%%%%%%%%%%%%%%%%%%%%%%%%%%%%%%%%%%%%%%%%%%%%%%%%%%%%%%%%%%%%%  

\section{Eliminating potentially nilpotent candidates via ideal saturation}\label{ideal}

Let $\F$ be any field.  Using some
tools and techniques from commutative algebra and algebraic geometry,
we will show that a znz-pattern $\A$ is potentially nilpotent over $\F$
if and only if a certain geometric set is nonempty.  As an application,
we develop an algebraic 
method to eliminate certain znz-patterns $\A$ as being
potentially nilpotent over $\F$.  We also explain how to use
Gr\"obner bases to show that some patterns are not potentially nilpotent.
While we will endeavor to keep this
material as self-contained as possible, further background material can be
found in the book
of Cox, Little, and O'Shea \cite{CLO}.

We begin with some notation.  Fix a znz-pattern $\A$,
and let $S_{\A} = \{(i,j) ~|~ (\A)_{i,j} \neq 0\}$ be 
the locations of the nonzero elements in $\A$.  We then define the polynomial ring
\[R_{\A} := \F[z_{i,j} ~|~ (i,j) \in S_{\A}] = \F[z_{i_1,j_1},\ldots,z_{i_t,j_t}]\] 
in $t = |S_{\A}|$ variables over the field $\F$.  
Associate to $\A$ the matrix $M_{\A}$ where 
\[(M_{\A})_{i,j} := \left\{
\begin{array}{ll}
z_{i,j} & \mbox{if $(\A)_{i,j} \neq 0$} \\
0 & \mbox{if $(\A)_{i,j} = 0$.}
\end{array}\right.\]
Note that $M_{\A}$ is not a realization of $\A$ since the entries of $M_{\A}$ are 
variables.  
The characteristic polynomial of $M_{\A}$ then has the form
\[p_{M_{\A}}(x) = x^n - F_1x^{n-1} + F_2x^{n-2} + \cdots + (-1)^{n-1}F_{n-1}x + (-1)^nF_n\]
where each coefficient $F_i = F_i(z_{i_1,j_1},\ldots,z_{i_t,j_t})$ is a polynomial in $R_{\A}$.
We then use the $n$ coefficients of the characteristic polynomial
to define an ideal of $R_{\A}$.  Precisely, let 
\[
I_{\A} := (F_1,\ldots,F_n) \subseteq R_{\A}.
\]
In fact, $I_{\A}$ is a homogeneous ideal since for each $F_i \neq 0$,
the polynomial $F_i$ is a homogeneous polynomial of degree $i$;  
recall that a polynomial $G$ is {\it homogeneous} if each
term of $G$ has the same degree.  To see this fact, note that
each term of $F_i$ corresponds
to a composite cycle of length $i$ in the directed graph $D(\A)$ (see the formula
in Section~\ref{basic}), from which it follows that $F_i$ is homogeneous.
Hence, every znz-pattern $\A$ induces a homogeneous ideal $I_{\A}$.

\begin{example}\label{examplenotation}
We illustrate the above notation with the following znz-pattern
\[\A = \begin{bmatrix} * & * & 0 \\
* & 0 & * \\
0 & * & *
\end{bmatrix}.\]
The associated matrix is
\[M_{\A} = \begin{bmatrix} z_{1,1} & z_{1,2} & 0 \\
z_{2,1} & 0 & z_{2,3} \\
0 & z_{3,2} & z_{3,3}
\end{bmatrix}\]
where the $z_{i,j}$'s are indeterminates in the 
polynomial ring $R_{\A} = \F[z_{1,1},z_{1,2},z_{2,1},z_{2,3},z_{3,2},z_{3,3}]$.
The ideal $I_{\A}$ is then generated by three homogeneous polynomials:
\[I_{\A} =  (z_{1,1} + z_{3,3}, \hspace{.2cm} z_{1,2}z_{2,1} + z_{2,3}z_{3,2} + z_{1,1}z_{3,3}, 
\hspace{.2cm}z_{1,1}z_{2,3}z_{3,2} + z_{1,2}z_{2,1}z_{3,3}).\]
\end{example}

 For each $\underline{a} = (a_{i_1,j_1},\ldots,a_{i_t,j_t}) \in \F^{t}$, 
let $M_{\A}(\underline{a})$
denote the matrix obtained by replacing each $z_{i_k,j_k}$ with $a_{i_k,j_k}$.
The characteristic polynomial of 
$M_{\A}(\underline{a})$ will have the form:
\[p_{M_{\A}(\underline{a})}(x) =x^n - F_1(\underline{a})x^{n-1} + F_2(\underline{a})x^{n-2} + \cdots + (-1)^{n-1}F_{n-1}(\underline{a})x + (-1)^nF_n(\underline{a}).\]
If $\A$ is potentially nilpotent over $\F$, 
then there exists an $\underline{a} \in \F^{t}$ with all $a_{i,j} \neq 0$ such that 
$M_{\A}(\underline{a})$  is a nilpotent matrix.  In particular, 
the characteristic polynomial of $M_{\A}(\underline{a})$ must
be $x^n$ by Lemma \ref{irreduciblereduction} (c), which, in turn, implies that 
$F_i(\underline{a}) = 0$ for $i = 1,\ldots,n$.  Thus, one can determine if
a znz-pattern $\A$ is potentially nilpotent over $\F$ if one 
understands the affine variety described by $I_{\A}$;
the {\it affine variety}\footnote[1]{What we call 
an affine variety is sometimes called an algebraic set.  We have
decided to follow \cite{CLO}.} defined by  $I_{\A}$
is the set 
\begin{eqnarray*}
V(I_{\A}) &=& \{\underline{a} \in \F^t ~|~ G(\underline{a}) = 0
~~\mbox{for all $G \in I_{\A}$}\} \\
& = & \{\underline{a}\in \F^t ~|~ F_1(\underline{a}) = \cdots = F_n(\underline{a}) = 0 \}.
\end{eqnarray*}
The set $V(I_{\A})$ contains all the elements $\underline{a} \in \F^t$ such that
the matrix $M_{\A}(\underline{a})$ is nilpotent.
Thus, if $\A$ is potentially nilpotent over $\F$ and $M_{\A}(\underline{a})$ is a 
realization of $\A$
that is nilpotent,
then $\underline{a} \in V(I_{\A})$.  However, the converse is not necessarily true.
Indeed, if $\underline{b} \in V(I_{\A})$, then while $M_{\A}(\underline{b})$ still has
a characteristic polynomial of $x^n$, the matrix $M_{\A}(\underline{b})$ may not be 
a realization  of $\A$.  As a simple example, note that $\underline{0} = (0,\ldots,0) \in V(I_{\A})$,
(since each $F_i$ is homogeneous, and thus $F_i(\underline{0}) = 0$
for all $i$), but $M_{\A}(\underline{0})$ 
is the zero-matrix,
which is not a realization of  $\A$.

For each indeterminate $z_{i,j} \in R_{\A}$, let $V(z_{i,j})$ denote the associated
affine variety, that is, $V(z_{i,j}) = \{\underline{a} \in \F^t ~|~ a_{i,j} = 0 \}$.
With this notation, we can determine if a znz-pattern $\A$ is potentially nilpotent over
$\F$:

\begin{theorem}  \label{characterization}
Fix a field $\F$ and a znz-pattern $\A$.  Then
$\A$ is potentially nilpotent over $\F$ if and only if 
\[V(I_{\A}) \setminus \bigcup_{k=1}^{t} V(z_{i_k,j_k}) \neq \emptyset.\]
\end{theorem}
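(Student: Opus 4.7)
The plan is to prove both implications by unwinding the definitions, using Lemma \ref{irreduciblereduction}(c) in the forward direction and the Cayley--Hamilton theorem in the reverse direction.

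For the forward implication, I would suppose $\A$ is potentially nilpotent over $\F$ and choose a nilpotent realization $A \in Q(\A,\F)$. Since $A$ is a matrix realization, every entry $A_{i_k,j_k}$ with $(i_k,j_k) \in S_{\A}$ is nonzero, so setting $\underline{a} = (A_{i_1,j_1}, \dots, A_{i_t,j_t}) \in \F^t$ gives a tuple with $a_{i_k,j_k} \neq 0$ for every $k$, and moreover $M_{\A}(\underline{a}) = A$ by construction. By Lemma \ref{irreduciblereduction}(c), the characteristic polynomial of $A$ is $x^n$, which forces $F_i(\underline{a}) = 0$ for $i = 1, \ldots, n$. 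This places $\underline{a}$ inside $V(I_{\A})$, while the condition $a_{i_k,j_k} \neq 0$ for every $k$ ensures that $\underline{a} \notin V(z_{i_k,j_k})$ for any $k$. Hence $\underline{a}$ lies in the indicated difference set.

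For the converse, I would take any $\underline{a} \in V(I_{\A}) \setminus \bigcup_{k=1}^{t} V(z_{i_k,j_k})$ and argue that $M_{\A}(\underline{a})$ is a nilpotent realization of $\A$. Being outside each $V(z_{i_k,j_k})$ means every $a_{i_k,j_k}$ is nonzero, so $M_{\A}(\underline{a}) \in Q(\A,\F)$, i.e., it is a legitimate realization. Being inside $V(I_{\A})$ means $F_i(\underline{a}) = 0$ for all $i$, so the characteristic polynomial of $M_{\A}(\underline{a})$ equals $x^n$. By the Cayley--Hamilton theorem (which holds over any field), $M_{\A}(\underline{a})^n = 0$, so $M_{\A}(\underline{a})$ is nilpotent, and $\A$ is potentially nilpotent over $\F$.

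There is no serious obstacle here; the argument is a bookkeeping exercise translating between the three descriptions of the data: (i) a nilpotent matrix in $Q(\A,\F)$, (ii) a point $\underline{a} \in \F^t$ together with the requirement that each coordinate corresponding to a nonzero position of $\A$ is nonzero, and (iii) vanishing of the coefficients $F_1, \ldots, F_n$ of the characteristic polynomial of $M_{\A}$. The only non-definitional inputs are Lemma \ref{irreduciblereduction}(c) and the Cayley--Hamilton theorem, each used once.
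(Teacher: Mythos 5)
Your proposal is correct and follows essentially the same route as the paper: the forward direction via Lemma \ref{irreduciblereduction}(c) forcing the characteristic polynomial to be $x^n$ so that all $F_i$ vanish, and the reverse direction by reading off nilpotence and the realization property from membership in $V(I_{\A})$ and nonvanishing of the coordinates. Your explicit invocation of Cayley--Hamilton in the converse merely spells out a step the paper leaves implicit.
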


\begin{proof}
If $\A$ is potentially nilpotent over $\F$, then there exists an $\underline{a} \in \F^t$
such that $M_{\A}(\underline{a})$ is nilpotent.  But that implies that
$\underline{a} \in V(I_{\A})$.  Furthermore, since $M_{\A}(\underline{a})$
is a realization of $\A$, $a_{i_k,j_k} \neq 0$ for $k =1,\ldots,t$, or in other
words, $\underline{a} \not\in   V(z_{i_k,j_k})$ for each $k$.  This proves the
first direction.  

For the reverse direction, if $\underline{a} \in V(I_{\A}) \setminus \bigcup_{k=1}^{t} V(z_{i_k,j_k})$,
then $M_{\A}(\underline{a})$ is a nilpotent matrix, and furthermore, since
$a_{i_k,j_k} \neq 0$ for all $k$, this matrix is also a realization of $\A$.
\end{proof}

As a consequence of Theorem \ref{characterization},
to determine if $\A$ is potentially
nilpotent over $\F$, it suffices to show that the set $V(I_{\A}) \setminus \bigcup_{k=1}^{t} V(z_{i_k,j_k})$
is non-empty.  Unfortunately,  this can be a highly non-trivial problem.  However
we can use
this reformulation to describe an algebraic method to determine if
the set  $V(I_{\A}) \setminus \bigcup_{k=1}^{t} V(z_{i_k,j_k})$ is empty, thus providing
a means to determine if 
$\A$ is not potentially nilpotent over $\F$.

We begin with a simple lemma.  A {\it monomial} of $R_{\A}$ is any polynomial of 
the form $m = z_{i_1,j_1}^{b_1}z_{i_2,j_2}^{b_2}\cdots z_{i_t,j_t}^{b_t}$ with each 
$b_i \in \mathbb{Z}_{\geq 0}$.

\begin{lemma}  \label{monomiallemma}
Fix a field $\F$ and a znz-pattern $\A$.  Suppose that
there exists a monomial $m = z_{i_1,j_1}^{b_1}z_{i_2,j_2}^{b_2}\cdots z_{i_t,j_t}^{b_t}
\in I_{\A}$.  Then $\A$ is not potentially nilpotent over $\F$.
\end{lemma}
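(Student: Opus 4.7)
The plan is to prove this by contraposition, using Theorem~\ref{characterization} as the bridge between the algebraic hypothesis (a monomial in the ideal) and the combinatorial conclusion (potential nilpotence).

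Assume $\A$ is potentially nilpotent over $\F$. By Theorem~\ref{characterization}, there exists a point $\underline{a} = (a_{i_1,j_1},\ldots,a_{i_t,j_t}) \in V(I_{\A}) \setminus \bigcup_{k=1}^{t} V(z_{i_k,j_k})$. The first condition says every polynomial in $I_{\A}$, and in particular the hypothesized monomial $m$, vanishes at $\underline{a}$; the second condition says $a_{i_k,j_k} \neq 0$ for every $k = 1,\ldots,t$.

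Now evaluate $m$ at $\underline{a}$:
\[
m(\underline{a}) = a_{i_1,j_1}^{b_1} a_{i_2,j_2}^{b_2} \cdots a_{i_t,j_t}^{b_t}.
\]
Each factor is a power of a nonzero element of the field $\F$, hence nonzero (using that a field has no zero divisors, and adopting the convention $a^0 = 1_{\F}$ for any coordinates absent from $m$). Therefore $m(\underline{a}) \neq 0$, contradicting $m \in I_{\A}$. Hence $\A$ cannot be potentially nilpotent over $\F$.

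There is essentially no obstacle here: the argument is a one-line evaluation once Theorem~\ref{characterization} is in hand. The only point worth flagging is the (trivial) need that $\F$ be a field, so that a product of nonzero elements is nonzero; this is of course already assumed throughout the section.
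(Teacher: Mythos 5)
Your proof is correct and is essentially the paper's argument: both rest on Theorem~\ref{characterization} together with the observation that, since $\F$ has no zero divisors, the monomial $m$ cannot vanish at a point with all coordinates nonzero. The paper phrases it directly (every $\underline{a} \in V(I_{\A})$ satisfies $m(\underline{a})=0$, hence lies in some $V(z_{i_k,j_k})$, so the set in Theorem~\ref{characterization} is empty), whereas you argue by contraposition, but the content is identical.
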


\begin{proof}
For any $\underline{a} \in V(I_{\A})$, we must have
$m(\underline{a}) = a_{i_1,j_1}^{b_1} \cdots a_{i_t,j_t}^{b_t} = 0$
because $m \in I_\A$.  But this means that $a_{i_k,j_k}  =0$ for
some $k = 1,\ldots,n$, and thus, $\underline{a} \in V(z_{i_k,j_k})$.  Now
apply Theorem \ref{characterization}.
\end{proof}

The colon operation and the saturation of ideals are two required algebraic ingredients:

\begin{definition}  \label{satdef}
Let $I$ and $J$ be ideals of a ring $R$.  Then
$I:J$ denotes the ideal
\[I:J = \{g \in R ~|~ gJ \subseteq I \}.\]
The {\it saturation
of $I$ with respect to $J$}, denoted $I:J^{\infty}$, is the ideal
\[I:J^{\infty} = \{g \in R ~|~ gJ^i \subseteq I ~~\mbox{for some integer $i \geq 0$}\}.\]
Alternatively, $I:J^{\infty} = (\cdots (((I:J):J):J) \cdots).$
\end{definition}

We come to one of the main results of this section.

\begin{theorem}  \label{saturation}
Fix a field $\F$ and a znz-pattern $\A$.  If  
$R_{\A} = \F[z_{i_1,j_1},\ldots,z_{i_t,j_t}]$,
then  let $m_{\A} := \prod_{k=1}^t z_{i_k,j_k}$ and let $J = (m_{\A})$
be the ideal generated by $m_{\A}$. 
Then
\begin{enumerate}
\item[(a)]  $V(I_{\A}) \setminus \bigcup_{k=1}^{t} V(z_{i_k,j_k}) \subseteq V(I_{\A}:J^{\infty})
\subseteq V(I_{\A}:J)$;
\item[(b)]  if $1 \in I_{\A}:J^{\infty}$, then $\A$ is not potentially nilpotent
over $\F$, or any field extension of $\F$;
\item[(c)]  if $1 \in I_{A}:J$, then $\A$ is not potentially nilpotent
over $\F$, or any field extension of $\F$.
\end{enumerate}
\end{theorem}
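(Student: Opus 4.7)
The plan is to prove (a) first as a direct set-theoretic containment, then deduce (b) by combining (a) with Theorem \ref{characterization} and observing that the hypothesis $1 \in I_{\A}:J^{\infty}$ is stable under extension of scalars. Part (c) will fall out of (b) via the standard inclusion $I_{\A}:J \subseteq I_{\A}:J^{\infty}$.

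For (a), the second containment $V(I_{\A}:J^{\infty}) \subseteq V(I_{\A}:J)$ is immediate: since $I_{\A}:J \subseteq I_{\A}:J^{\infty}$ (any $g$ with $gJ \subseteq I_{\A}$ also satisfies $gJ^1 \subseteq I_{\A}$), passing to affine varieties reverses the inclusion. For the first containment, I would take any $\underline{a} \in V(I_{\A}) \setminus \bigcup_k V(z_{i_k,j_k})$ and show $\underline{a} \in V(I_{\A}:J^{\infty})$. By assumption each coordinate $a_{i_k,j_k}$ is nonzero, hence $m_{\A}(\underline{a}) = \prod_k a_{i_k,j_k} \neq 0$ because $\F$ is a field. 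For any $g \in I_{\A}:J^{\infty}$, the defining condition gives some $i \geq 0$ with $g \cdot m_{\A}^i \in I_{\A}$. Evaluating this membership at $\underline{a} \in V(I_{\A})$ yields $g(\underline{a}) \cdot m_{\A}(\underline{a})^i = 0$, and cancelling the nonzero factor $m_{\A}(\underline{a})^i$ in the field $\F$ gives $g(\underline{a}) = 0$, as required.

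For (b), the hypothesis $1 \in I_{\A}:J^{\infty}$ means $I_{\A}:J^{\infty} = R_{\A}$, so its variety is empty. Part (a) then forces $V(I_{\A}) \setminus \bigcup_k V(z_{i_k,j_k}) = \emptyset$, and Theorem \ref{characterization} gives that $\A$ is not potentially nilpotent over $\F$. To promote this to arbitrary field extensions $\F' \supseteq \F$, I would unpack the hypothesis: $1 \in I_{\A}:J^{\infty}$ is equivalent to $m_{\A}^i \in I_{\A}$ for some $i$, i.e., to the existence of polynomials $g_1,\ldots,g_n \in R_{\A}$ with $m_{\A}^i = g_1F_1 + \cdots + g_nF_n$. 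This is an identity in $R_{\A}$ that persists after base change to $R_{\A}' = \F'[z_{i_1,j_1},\ldots,z_{i_t,j_t}]$, because the generating polynomials $F_1,\ldots,F_n$ defined from the characteristic polynomial of $M_{\A}$ have the same coefficients regardless of the ambient field. Thus $1$ lies in the analogous saturation over $R_{\A}'$, and the argument above applied over $\F'$ shows $\A$ is not potentially nilpotent over $\F'$ either.

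Finally, (c) is a one-line consequence of (b): the inclusion $I_{\A}:J \subseteq I_{\A}:J^{\infty}$ turns $1 \in I_{\A}:J$ into $1 \in I_{\A}:J^{\infty}$, so (b) applies. No step presents a real obstacle; the only subtlety worth flagging is the extension-of-scalars assertion in (b), where one must recognize that the saturation condition encodes a polynomial identity with coefficients in $\F$, and hence descends to, and ascends from, any extension.
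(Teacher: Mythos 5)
Your proposal is correct and follows essentially the same route as the paper: part (a) by evaluating $G\,m_{\A}^i \in I_{\A}$ at a point with all coordinates nonzero and cancelling, part (c) from $I_{\A}:J \subseteq I_{\A}:J^{\infty}$, and part (b) resting on the fact that $1 \in I_{\A}:J^{\infty}$ gives $m_{\A}^i \in I_{\A}$, an identity that persists under field extension (the paper routes (b) through Lemma \ref{monomiallemma} rather than through (a) and the emptiness of $V(I_{\A}:J^{\infty})$, but this is an inessential difference since that lemma is itself just Theorem \ref{characterization}). Your explicit unpacking of the base-change step as a polynomial identity $m_{\A}^i = \sum g_jF_j$ is a nice spelling-out of the paper's one-line assertion.
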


\begin{proof}
Statement $(a)$ is a well-known result via the algebraic geometry dictionary.  For completeness,
we include a short proof in this context.  Suppose that $\underline{a} \in V(I_{\A}) \setminus \bigcup_{k=1}^{t} V(z_{i_k,j_k})$, and thus, $a_{i_k,j_k} \neq 0$ for $k=1,\ldots,t$.  Suppose that
$G \in I_{\A}:J^{\infty}$.  Thus, there exists an integer $i$ such that 
$GJ^i \subseteq I_{\A}$.  But because $J^i = (m_{\A}^i)$, this implies that
$Gm_{\A}^i \in I_{\A}$.  Since $\underline{a} \in V(I_{\A})$, we have $(Gm^i_{\A})(\underline{a})
= G(\underline{a})m^i_{\A}(\underline{a}) = 0$.  But since each $a_{i_k,j_k} \neq 0$,
we have $m^i_{\A}(\underline{a}) = a_{i_1,j_1}^i\cdots a_{i_t,j_t}^i \neq 0$, and hence $G(\underline{a}) = 0$,
or equivalently, $\underline{a} \in  V(I_{\A}:J^{\infty})$.  The second inclusion
containment follows from the fact that $I_{\A}:J \subseteq I_{\A}:J^{\infty}$.

To prove $(b)$, suppose that $1 \in I_{\A}:J^{\infty}$.  It then follows that there exists
an $i$ such that $J^i \subseteq I_{\A}$, and hence $m_{\A}^i \in I_{\A}$.  But then
we get the desired conclusion by Lemma \ref{monomiallemma}.
In any extension of $\F$, we will
continue to have $m_{\A}^i \in I_{\A}$.  Statement $(c)$ follows directly
from $(b)$ since we will have $1 \in I_{\A}:J \subseteq I_{\A}:J^{\infty}$.  
\end{proof}

\begin{remark} Many computer algebra systems allow one to compute the saturation
of an ideal, thus making Theorem \ref{saturation} a practical tool.  The
computational commutative algebra programs CoCoA \cite{C} and Macaulay 2 \cite{M2}
are two free programs that can be used to compute 
the ideals $I:J$ and $I:J^{\infty}$.  On the 
second author's web page\footnote[2]{{\tt http://flash.lakeheadu.ca/$\sim$avantuyl/research/research.html}}
is a short introduction on how to use these programs to compute the examples
found below.
\end{remark}

Some well-known necessary facts for nilpotent matrices are simple corollaries of Theorem
\ref{saturation}.

\begin{corollary}\label{corsaturation}
Let $\A$ be znz-pattern.  If $\A$ has only one nonzero entry on the diagonal or
only one transversal, then $\A$ is not potentially nilpotent over any field $\F$.
\end{corollary}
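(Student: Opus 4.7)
The plan is to reduce both cases to Lemma \ref{monomiallemma} by exhibiting a monomial inside $I_{\A}$. The key observation, already set up in Section \ref{basic} and reused in Section \ref{ideal}, is that each coefficient $F_i$ of the characteristic polynomial of $M_{\A}$ is a signed sum over composite cycles of length $i$ in the digraph $D(\A)$, and each such composite cycle contributes the monomial obtained by multiplying the variables along its arcs.

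For the first hypothesis, that only one diagonal entry of $\A$ is nonzero, I would read off $F_1$. The composite cycles of length one in $D(\A)$ are precisely the loops, and the loops correspond bijectively to the nonzero diagonal positions of $\A$. So $F_1$ is the sum of the diagonal variables $z_{j,j}$; under the hypothesis this sum collapses to the single variable $z_{i,i}$ at the unique nonzero diagonal position. Thus $z_{i,i} \in I_{\A}$ is a monomial, and Lemma \ref{monomiallemma} applies.

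For the second hypothesis, that $\A$ has only one transversal, I would instead look at the top coefficient $F_n$. A transversal of $\A$ is a choice of nonzero positions $(1,\sigma(1)),\ldots,(n,\sigma(n))$ for some $\sigma \in S_n$, and such a choice is exactly a disjoint union of cycles in $D(\A)$ whose total length is $n$; that is, a composite cycle of length $n$. Under the uniqueness hypothesis, the formula for $F_n$ reduces to the single signed term $\pm z_{i_1,j_1}\cdots z_{i_n,j_n}$ corresponding to that transversal. Again a monomial lies in $I_{\A}$, so Lemma \ref{monomiallemma} finishes the proof.

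I do not expect any real obstacle: the whole point is that under either hypothesis, one of $F_1$ or $F_n$ degenerates to a single monomial, after which the machinery already in place does the work. If desired, the argument can equivalently be phrased through Theorem \ref{saturation}(c), since whenever $F_1$ or $F_n$ is a monomial dividing $m_{\A}$, one obtains $1 \in I_{\A} : J$ immediately, and hence the stronger statement that $\A$ fails to be potentially nilpotent over any extension of $\F$.
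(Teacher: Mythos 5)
Your proposal is correct and matches the paper's argument: in both cases one shows that a generator of $I_{\A}$ (namely $F_1$, the trace, or $F_n$, up to sign the determinant) degenerates to a single monomial, and then Lemma \ref{monomiallemma} (equivalently, $1 \in I_{\A}:(m_{\A})$ via Theorem \ref{saturation}) finishes the proof. The cycle-based reading of $F_1$ and $F_n$ is just the paper's trace/determinant observation phrased through the digraph formula, so the two proofs are essentially identical.
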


\begin{proof} In both cases, we show that one of the generators of $I_{\A}$ must
be a monomial.

 If $\A$ has only one nonzero entry on the diagonal, say at position
$(i,i)$, then the trace of $M_{\A}$ is $z_{i,i}$.  But since $F_1 = \operatorname{tr} M_{\A} = z_{i,i}$, 
it immediately follows that $m_{\A} \in I_{\A}$, and hence,  $1 \in I_{\A}:(m_{\A})$.
Similarly, if $\A$ has only one transversal, the determinant of $M_{\A}$, which equals $F_n$,
has form $z_{i_1,j_1}^{b_1}z_{i_2,j_2}^{b_2}\cdots z_{i_t,j_t}^{b_t}$ where
$b_k = 1$ or $0$.   It then follows that $m_{\A} \in I_\A$, or equivalently,
$1 \in I_{\A}:(m_{\A})$.
\end{proof}

We now provide some illustrative examples.

\begin{example}\label{examplesaturation}
Let $\A$ be the znz-pattern of Example \ref{examplenotation}.
Let $\F$ be any field of characteristic two.
Because 
\[I_{\A} =  (z_{1,1} + z_{3,3}, ~~z_{1,2}z_{2,1} + z_{2,3}z_{3,2} + z_{1,1}z_{3,3}, 
~~z_{1,1}z_{2,3}z_{3,2} + z_{1,2}z_{2,1}z_{3,3}),\]
the monomial $z^2_{1,1}z_{3,3} \in I_\A$ because
\[z_{1,2}z_{2,1}(z_{1,1}+z_{3,3}) + z_{1,1}(z_{1,2}z_{2,1} + z_{2,3}z_{3,2} + z_{1,1}z_{3,3}) +  (z_{1,1}z_{2,3}z_{3,2} + z_{1,2}z_{2,1}z_{3,3})\]
\[=  2z_{1,2}z_{2,1}z_{1,1} + 2z_{1,1}z_{2,3}z_{3,2} + z^2_{1,1}z_{3,3}  + 2z_{1,2}z_{2,1}z_{3,3} = z^2_{1,1}z_{3,3}\]
since $x+x = 0$ for any $x \in \F$.
Thus $\A$ is not potentially nilpotent over any field of extension of $\F$.
Note that when $\F = \Z_2$ is the finite field with exactly two elements,
then one could use a direct calculation because there is only one choice for each $z_{i,j}$, 
namely $1_{\F}$.
However, this method shows that $\A$ is not potentially nilpotent over any
extension of this field.
\end{example}

\begin{example}
It is possible that $1 \in I_{\A}:J^{\infty}$, but $1 \not\in I_{\A}:J$.  As an example,
consider the znz-pattern
\[\A =  \begin{bmatrix} * & 0 & 0 \\
0 & * & * \\
0 & * & *
\end{bmatrix}.\]
We can see immediately that $\A$ is not potentially nilpotent over any field $\F$
since any realization $A$ of $\A$ will have the nonzero eigenvalue of $a_{1,1}$.
However, this cannot be deduced from $I_{\A}:J$.  For example,
if $\F = \Z_2$, then we use CoCoA or Macaulay 2 to find 
\footnotesize
\begin{eqnarray*}
I_{\A}:J &=& (z_{1,1} + z_{2,2} + z_{3,3}, -z_{1,1}z_{2,2} + z_{2,3}z_{3,2} - z_{1,1}z_{3,3} - z_{2,2}z_{3,3}, 
-z_{1,1}z_{2,3}z_{3,2} + z_{1,1}z_{2,2}z_{3,3}):(m_{\A}) \\
& = & (z_{1,1} + z_{2,2} + z_{3,3}, z_{2,2}^2 + z_{3,3}^2, z_{2,3}z_{3,2} + z_{2,2}z_{3,3}).
\end{eqnarray*}
\normalsize
However, a computer algebra system will reveal that $1 \in I_{\A}:J^{\infty}$, thus showing that $\A$ is not potentially nilpotent
over $\F$.
\end{example}

\begin{example}  Using the saturation of ideals also lends itself
to sign patterns.  Consider the signed pattern
\[\A 
=
\begin{bmatrix} - & - & - & 0 & 0 \\
+ & + & + & 0 & 0 \\
0 & 0 & 0 & - & - \\
0 & - & 0 & 0 & - \\
- & 0 & 0 & 0 & 0 
\end{bmatrix}.\]
The pattern $\A$ is the pattern $\mathcal{G}_5$ studied in \cite{KOvdD}.  We then consider
the matrix
\[M_{\A} 
=
\begin{bmatrix} -z_{1,1} & -z_{1,2} & -z_{1,3} & 0 & 0 \\
z_{2,1} & z_{2,2} & z_{2,3} & 0 & 0 \\
0 & 0 & 0 & -z_{3,4} & -z_{3,5} \\
0 & -z_{4,2} & 0 & 0 & -z_{4,5} \\
-z_{5,1} & 0 & 0 & 0 & 0 
\end{bmatrix}.\]
We define $I_{\A}$ as above.  Letting $\F = \R$, we find that 
$1 \in I_{\A}:(m_{\A})^{\infty}$ using CoCoA.  This sign pattern $\A$, therefore,
is not potentially nilpotent over $\R$, as first discovered in \cite{KOvdD};
in fact $\mathcal{G}_5$ is part of a much larger family of non-potentially
nilpotent patterns.
\end{example}

As we will show below, the converse of  Theorem \ref{saturation} (b) does not hold.
To show that $\A$ is not potentially nilpotent, we apply
the theory of Gr\"obner bases.  Roughly
speaking, a Gr\"obner basis of $I_{\A}$ is a ``good'' choice of generators of $I_{\A}$
which can allow one to describe the affine variety $V(I_{\A})$.   

We now recall the needed definitions.
We fix a {\it monomial ordering} $>$ on the monomials of $R_{\A}$, that is, 
(1) $>$ is a total ordering on the set of monomials,
(2) $>$ is compatible with multiplication 
(if $m_1 > m_2$, then for any monomial $m$, $mm_1 > mm_2$),
and (3) $>$ is also a well-ordering.  
Of particular importance is the {\it lex monomial ordering}, 
that is, 
\[z_{i_1,j_1}^{a_1}z_{i_2,j_2}^{a_2}\cdots z_{i_t,j_t}^{a_t} >z_{i_1,j_1}^{b_1}z_{i_2,j_2}^{b_2}\cdots z_{i_t,j_t}^{b_t}\] 
if and only if the first nonzero entry of the $t$-tuple
$(a_1-b_1,\ldots,a_t-b_t)$ is positive.

For any polynomial $F = \sum c_{\alpha}m_{\alpha} \in R_{\A}$
where $m_{\alpha}$ are monomials and $c_{\alpha} \in \F$, 
the leading term of $F$, denoted $\operatorname{LT}_>(F)$ is the largest monomial term
$c_{\alpha}m_\alpha$ in $F$ with respect to $>$.  

\begin{definition}
A subset $\{G_1,\ldots,G_s\}$ of an ideal
$I$ is a {\it Gr\"obner  basis} of $I$ with respect to a monomial
ordering $>$ if for all $F \in I$, $LT_>(F)$
is divisible by $LT_>(G_i)$ for some $i$. 
\end{definition}

We then make use of the following two properties of Gr\"obner bases.

\begin{theorem}  \label{grobnerprop}
Let $R = \F[z_{i_1,j_1},\ldots,z_{i_t,j_t}]$.  Let
$>$ be the lex monomial ordering with the property that $z_{i_1,j_1} > \cdots > z_{i_t,j_t}$.
Let $I$ be an ideal of $R$, and suppose that $\{G_1,\ldots,G_s\}$
is a Gr\"obner basis of $I$ with respect to $>$.  Then
\begin{enumerate}
\item[(a)] $I = (G_1,\ldots,G_s)$, that is, the Gr\"obner basis generates $I$;
\item[(b)] Let $I_l = I \cap \F[z_{i_{l+1},j_{l+1}},\ldots,z_{i_t,j_t}]$.  Then
$I_l$ is the $l$th elimination ideal, and a Gr\"obner basis for 
$I_l$ is $\{G_1,\ldots,G_s\} \cap \F[z_{i_{l+1},j_{l+1}},\ldots,z_{i_t,j_t}]$.
\end{enumerate} 
\end{theorem}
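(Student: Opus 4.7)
The plan is to establish (a) by the division algorithm together with the well-ordering property of $>$, and (b) by exploiting the lex-specific structure of leading terms. Part (a) is the standard ``a Gr\"obner basis generates its ideal'' statement, and part (b) is the Elimination Theorem, which is genuinely particular to the lex ordering.

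For (a), I would take any $F \in I$ and perform the multivariable polynomial division of $F$ by the ordered tuple $(G_1,\ldots,G_s)$, obtaining an expression $F = h_1 G_1 + \cdots + h_s G_s + r$ in which no monomial of $r$ is divisible by any $\operatorname{LT}_>(G_i)$. Since $F$ and each $h_i G_i$ lie in $I$, the remainder $r = F - \sum_i h_i G_i$ lies in $I$ as well. If $r$ were nonzero, then applying the Gr\"obner basis hypothesis to $r \in I$ would force $\operatorname{LT}_>(r)$ to be divisible by some $\operatorname{LT}_>(G_i)$, contradicting the defining property of the remainder. Hence $r = 0$, giving $F \in (G_1,\ldots,G_s)$. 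The reverse inclusion is immediate since each $G_i \in I$.

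For (b), set $G^{[l]} := \{G_1,\ldots,G_s\} \cap \F[z_{i_{l+1},j_{l+1}},\ldots,z_{i_t,j_t}]$. The inclusion $G^{[l]} \subseteq I_l$ is trivial, so the work is to verify the Gr\"obner basis condition for $I_l$. The crucial ingredient is a lex-specific observation: if a nonzero polynomial $H \in R$ has $\operatorname{LT}_>(H)$ involving only the variables $z_{i_{l+1},j_{l+1}},\ldots,z_{i_t,j_t}$, then every monomial of $H$ involves only those variables. This holds because under the lex order with $z_{i_1,j_1} > \cdots > z_{i_t,j_t}$, any monomial containing some $z_{i_k,j_k}$ with $k \leq l$ is strictly greater than every monomial in $\F[z_{i_{l+1},j_{l+1}},\ldots,z_{i_t,j_t}]$, so such a monomial could not be dominated by the leading term. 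Now take any $F \in I_l$. Since $F \in I$, the hypothesis gives $\operatorname{LT}_>(G_i) \mid \operatorname{LT}_>(F)$ for some $i$; as $\operatorname{LT}_>(F)$ uses only the tail variables, so does $\operatorname{LT}_>(G_i)$, and the observation forces $G_i \in G^{[l]}$. This shows $G^{[l]}$ is a Gr\"obner basis for $I_l$, and then applying part (a) to $I_l$ yields $I_l = (G^{[l]})$.

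The main subtlety lies in the lex-specific observation used in (b): the elimination property genuinely fails for graded orderings such as grlex or grevlex, where a polynomial's leading term can use only tail variables while other terms involve head variables of smaller total degree. The division-algorithm argument for (a), by contrast, is indifferent to the choice of monomial order and goes through mechanically once one has the standard termination argument via the well-ordering property of~$>$.
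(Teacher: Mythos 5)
Your proof is correct; the paper itself offers no argument for this theorem, simply citing Cox--Little--O'Shea for (a) and for the Elimination Theorem in (b), and your division-algorithm proof of (a) together with the lex-specific leading-term argument for (b) are precisely the standard proofs of those cited results. The key subtlety you correctly handle is that a divisor of a monomial in the tail variables $z_{i_{l+1},j_{l+1}},\ldots,z_{i_t,j_t}$ is itself such a monomial, and under the lex order this forces the entire polynomial $G_i$, not just its leading term, into the subring, so that $G_i$ lands in $\{G_1,\ldots,G_s\} \cap \F[z_{i_{l+1},j_{l+1}},\ldots,z_{i_t,j_t}]$.
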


\begin{proof} Statement (a) is \cite[Chapter 2, $\S$5, Corollary 2]{CLO}, 
while (b) is known as the
Elimination Theorem \cite[Chapter 3, $\S$1, Theorem 2]{CLO}. 
\end{proof}

To make use of the above theorem to describe the affine variety $V(I)$,
one first finds a Gr\"obner basis $\{G_1,\ldots,G_s\}$ for $I$
with respect to the lex monomial order.  Theorem \ref{grobnerprop}(b)
implies that we can partition the $G_i$'s so that the first set are polynomials
in the variables $\{z_{i_1,j_1},\ldots,z_{i_t,j_t}\}$, the second set are polynomials
in the variables $\{z_{i_2,j_2},\ldots,z_{i_t,j_t}\}$, and so on, i.e.,
the number of variables is eliminated as you move through the partitions.
In some (but not all) cases, one or more of the $G_i$'s may
only contain one variable.  We can then find roots of these polynomials (either explicitly
or numerically),  and then using
these solutions, find roots to the other polynomials.  

We illustrate how to use Gr\"obner bases to eliminate some znz-patterns $\A$
as being potentially nilpotent over $\F$.  We will study the following
pattern in more detail in the next section.

\begin{example} \label{converse}
We consider the znz-pattern
\[\A = \begin{bmatrix} * & * & 0 \\
0 & * & * \\
* & 0 & *
\end{bmatrix}\]
and let $\F = \mathbb{R}$.  In this case,
the generators of the ideal $I_{\A}$ are 
\[I_{\A} = (z_{1,1} + z_{2,2} + z_{3,3}, ~~z_{1,1}z_{2,2} + z_{1,1}z_{3,3} + z_{2,2}z_{3,3},
~~ z_{1,2}z_{2,3}z_{3,1} + z_{,1}z_{2,2}z_{3,3}).\]
We can use a computer algebra program
to check that $I_{\A}:(z_{1,1}z_{1,2}z_{2,2}z_{2,3}z_{3,1}z_{3,3})^{\infty} \neq (1)$.  
Thus Theorem \ref{saturation} does not tell us if  
$\A$ is not potentially nilpotent
over $\R$.

We use either CoCoA or Macaulay 2 to find a Gr\"obner basis for $I_{\A}$:
\[\{z_{1,1} + z_{2,2} + z_{3,3},~~z_{1,2}z_{2,3}z_{3,1} + z_{3,3}^3,
~~z_{2,2}^2 + z_{2,2}z_{3,3} + z_{3,3}^2\}.\]
Notice that the last polynomial contains the fewest number of variables.
If $\A$ was potentially nilpotent, then there exists 
$\underline{a} =(a_{1,1},a_{1,2},a_{2,2},a_{2,3},a_{3,1},a_{3,3}) \in \mathbb{R}^6$ such that 
$M_{\A}(\underline{a})$ is nilpotent,
and specifically, $\underline{a}$ is a zero of all three polynomials in the Gr\"obner basis.  
Note $a_{3,3}$ must be a nonzero real number.  But for any nonzero 
real number $a \in \mathbb{R}$, the last polynomial from the Gr\"obner basis 
implies that $a_{2,2}$ will then have to satisfy
\[z_{2,2}^2 + az_{2,2} + a^2 = 0 \Leftrightarrow z_{2,2} = a\left(\frac{-1 \pm \sqrt{-3}}{2}\right).\]
But then for every nonzero choice of $a \in \mathbb{R}$,  $a_{2,2} \not\in \R$.
Hence, $\A$ is not potentially nilpotent over $\mathbb{R}$.
Observe that this example shows that the converse of Theorem \ref{saturation}(b)
is false.
\end{example}

%%%%%%%%%%%%%%%%%%%%%%%%%%%%%%%%%%%%%%%%%%%%%%%%%%%%%%%%%%%%%%%%%%%%%%%%%%%%%%%%%%%%%%

\section{Graphs without $k$-cycles with $k$ small: a necessary condition}\label{condition}

Let $D(\A)$ be the digraph associated to the znz-pattern $\A$.
It is known that if $\A$ is potentially nilpotent over $\F = \R$,
and if $D(\A)$ has at least two loops, then $D(\A)$ must
have a 2-cycle.
See, for example, \cite[Lemma 3.2]{DHHMMPSV} which considers
the signed case, but the proof also holds in the non-signed case.
When $\F \neq \R$, then
this necessary condition need not hold, as shown in the example
below:

\begin{example}  \label{z7example}
Let $\A$ be the pattern of Example \ref{converse}.
The associated graph $D(\A)$ has three loops but no two cycles, 
so \cite[Lemma 3.2]{DHHMMPSV} implies that $\A$ is not potentially
nilpotent over $\R$.  However, $\A$ is potentially nilpotent
over $\F = \Z_7$ as demonstrated with the realization
\[\begin{bmatrix}
4_\F & 1_\F & 0 \\
0 & 2_\F & 1_\F \\
-1_\F & 0 & 1_\F
\end{bmatrix}.\]
\end{example}

Our goal in this section is to understand and generalize this example.
More precisely,
we provide a necessary condition on $\F$ for a znz-pattern $\A$
to be potentially nilpotent over $\F$ if $D(\A)$ has loops,
but no $k$-cycles of small size.
We begin by recalling the definition of the roots of unity and one
result concerning these numbers.

\begin{definition}  Fix a field $\F$.  We say that $\F$ {\it contains
all the $m^{th}$ roots of unity} if all of
the $m$ roots of the polynomial $x^m-1_\F = (x-1_\F)(x^{m-1} + x^{m-2} + \cdots + x + 1_\F)$
belong to $\F$, that is, $x^m-1_\F$ factors into $m$ linear forms over $\F$.
\end{definition}

\begin{lemma}  \label{rootsofunitylemma}
Fix a field $\F$ and integer $m \geq 2$.  Suppose that 
there is a solution $(a_1,\ldots,a_m) \in \F^m$ to the $m-1$ 
elementary symmetric polynomial equations
\begin{eqnarray*}
z_1 + z_2 + \cdots + z_m & = & 0 \\
z_1z_2 + \cdots +z_{m-1}z_m & = & 0 \\
& \vdots &  \\
z_1z_2\cdots z_{m-1} + \cdots +z_2z_3\cdots z_m &= & 0
\end{eqnarray*}
with all $a_j \neq 0$.
Then $\F$ contains all the $m^{th}$ roots of unity.
\end{lemma}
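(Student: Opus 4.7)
The plan is to repackage the given equations as the vanishing of the middle coefficients of a single monic polynomial. The $i$-th elementary symmetric polynomial $e_i(z_1,\ldots,z_m)$ is exactly $(-1)^i$ times the $x^{m-i}$-coefficient of $\prod_{j=1}^m(x-z_j)$, and the $m-1$ equations in the statement are precisely $e_1(a)=e_2(a)=\cdots=e_{m-1}(a)=0$. So I would form
\[f(x) := \prod_{j=1}^m (x-a_j) \in \F[x]\]
and observe that, by hypothesis, all of its middle coefficients vanish; the only surviving symmetric function is $e_m(a)=a_1a_2\cdots a_m$, which is nonzero because every $a_j$ is nonzero by assumption. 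Hence $f$ collapses to
\[f(x) = x^m + (-1)^m a_1 a_2 \cdots a_m.\]

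At this point $f$ is a monic polynomial of the shape $x^m + c$ with $c \neq 0$ in $\F$, and its $m$ roots in $\F$ (counted with multiplicity) are exactly the $a_j$. To extract a factorization of $x^m-1_\F$, I would rescale by picking any root --- say $a_1$ --- and setting $b_j := a_j/a_1 \in \F$ for $j=1,\ldots,m$. Substituting $x = a_1 y$ into the identity for $f$ and dividing through by $a_1^m$ produces
\[\prod_{j=1}^m(y-b_j) = y^m + c' \quad \text{for some } c' \in \F.\]
Now $b_1 = 1_\F$ is a root of the left-hand side, so evaluating at $y=1_\F$ forces $1_\F + c' = 0$, and hence $c' = -1_\F$. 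Therefore $y^m - 1_\F = \prod_{j=1}^m(y-b_j)$, which is precisely the desired factorization of $x^m - 1_\F$ into $m$ linear forms over $\F$.

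There is no real obstacle here: the argument is a one-line manipulation of elementary symmetric polynomials combined with a single rescaling. The only conceptual point worth flagging is that the conclusion does not require the $a_j$ (or the $b_j$) to be distinct --- the definition of $\F$ containing all the $m^{th}$ roots of unity only asks for a factorization into $m$ linear forms, possibly with repetition, and that is exactly what the identity $\prod_j(y-b_j) = y^m - 1_\F$ delivers.
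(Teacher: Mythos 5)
Your proof is correct and takes essentially the same route as the paper's: both rescale the solution so that one coordinate equals $1_\F$ and then read the hypotheses as saying that all middle coefficients of $\prod_{j}(x-a_j)$ vanish. The only difference is bookkeeping --- the paper normalizes $a_m=1_\F$ and matches $\prod_{j<m}(x-a_j)$ with $x^{m-1}+\cdots+x+1_\F$ coefficient by coefficient, while you keep the full degree-$m$ product $x^m+c$ and pin down the constant by evaluating at the known root $1_\F$, which is a slightly tidier version of the same argument.
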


\begin{proof}
If $(a_1,\ldots,a_m)$ is such a solution, then $(a_1a_m^{-1},\ldots,a_ma_m^{-1})$
is also a solution.  Thus, we can assume $a_m = 1_\F$.  Hence, substituting
$(a_1,\ldots,a_{m-1},1_\F)$ into the above equations and rearranging gives:
\begin{eqnarray*}
a_1 + a_2 + \cdots + a_{m-1} & = & -1_\F \\
a_1a_2 + \cdots +a_{m-2}a_{m-1}  & = & 1_\F \\
& \vdots &  \\
a_1a_2\cdots a_{m-1} &= & (-1_\F)^{m-1}.
\end{eqnarray*}
We claim that $a_1,\ldots,a_{m-1}$
are all the non-identity $m^{th}$ roots of unity.  Indeed,
\footnotesize
\begin{eqnarray*}
(x - a_1)(x-a_2)\cdots (x- a_{m-1})& = & 
x^{m-1} -(a_1+a_2+\cdots a_{m-1})x^{m-2} + (a_1a_2 + \cdots + a_{m-2}a_{m-1})x^{m-3} \\
&&+ \cdots + (-1)^{m-2} (a_1\cdots a_{m-2} + \cdots + a_2\cdots a_{m-1})x + 
(-1)^{m-1}a_1\cdots a_{m-1} \\
& = & x^{m-1} + x^{m-2} + \cdots +x + 1_\F. 
\end{eqnarray*}
\normalsize
That is, $a_1,\ldots,a_{m-1}$ are the zeros of  $x^{m-1} + x^{m-2} + \cdots +x + 1_\F$. 
The conclusion now follows.
\end{proof}

\begin{theorem}  \label{nomcycle}
Let $\A$ be a znz-pattern with $m \geq 2$ nonzero entries
on the diagonal, and suppose that $D(\A)$ has no $k$-cycles
with $2 \leq k \leq m-1$.  If $\A$ is potentially nilpotent
over $\F$, then $\F$ contains all the $m^{th}$ roots of unity.
\end{theorem}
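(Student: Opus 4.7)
The plan is to exploit the cycle-based formula for the coefficients of the characteristic polynomial (as recalled in Section~\ref{basic}) to show that, under the hypotheses of the theorem, the first $m-1$ coefficients $F_1,\ldots,F_{m-1}$ reduce exactly to elementary symmetric polynomials in the $m$ diagonal entries. Once this reduction is carried out, Lemma~\ref{rootsofunitylemma} applies directly.

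First, suppose $\A$ is potentially nilpotent over $\F$, and let $A\in Q(\A,\F)$ be a nilpotent realization. Let the $m$ loops of $D(\A)$ sit at positions $(j_1,j_1),\ldots,(j_m,j_m)$, and denote by $a_k=(A)_{j_k,j_k}$ the corresponding diagonal entries; by construction each $a_k\neq 0$. By Lemma~\ref{irreduciblereduction}(c) the characteristic polynomial of $A$ equals $x^n$, so in the notation of Section~\ref{basic} we have $r_i=0$ for every $i=1,\ldots,n$.

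Next I would look at the expansion of $r_i$ for $1\le i\le m-1$. The sum runs over collections of pairwise disjoint simple cycles whose total length is $i$. By hypothesis the only simple cycles in $D(\A)$ have length $1$ or length $\geq m$. Since $i\le m-1$, no cycle of length $\geq m$ can appear in such a composite, so the sum reduces to a sum over unordered $i$-element subsets of the $m$ loops. Each loop contributes a factor $(-1)^{1-1}\prod(\gamma)=a_k$, and the outer sign is $(-1)^{i}$. Thus
\[
r_i \;=\; (-1)^{i}\sum_{1\le k_1<\cdots<k_i\le m} a_{k_1}a_{k_2}\cdots a_{k_i}
\;=\;(-1)^i\,e_i(a_1,\ldots,a_m),
\]
where $e_i$ is the $i$th elementary symmetric polynomial. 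The vanishing $r_i=0$ therefore gives $e_i(a_1,\ldots,a_m)=0$ for $i=1,\ldots,m-1$. These are precisely the $m-1$ elementary symmetric equations appearing in Lemma~\ref{rootsofunitylemma}, and the tuple $(a_1,\ldots,a_m)\in\F^m$ has all entries nonzero. Lemma~\ref{rootsofunitylemma} then forces $\F$ to contain all $m^{\text{th}}$ roots of unity, which is the desired conclusion.

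The only real content beyond bookkeeping is the combinatorial observation in the middle step — namely that the absence of $k$-cycles for $2\le k\le m-1$, together with $i\le m-1$, rules out every non-loop cycle from contributing to $r_i$. I expect this to be the ``main obstacle'' only in the sense of making sure the cycle formula is applied correctly; once that is settled, the reduction to elementary symmetric polynomials and the invocation of Lemma~\ref{rootsofunitylemma} are immediate.
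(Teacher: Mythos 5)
Your proposal is correct and follows essentially the same route as the paper: both arguments observe that the absence of $k$-cycles for $2\le k\le m-1$ forces the first $m-1$ characteristic-polynomial coefficients to be exactly the elementary symmetric polynomials in the $m$ nonzero diagonal entries, which must vanish at a point with all coordinates nonzero, and then invoke Lemma~\ref{rootsofunitylemma}. The only cosmetic difference is that the paper phrases this via the generators $F_1,\ldots,F_{m-1}$ of the ideal $I_{\A}$ rather than directly via the coefficients $r_i$ of the realization's characteristic polynomial.
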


\begin{proof}  After relabelling, we may assume that the nonzero diagonal entries
of $\A$ are at $(1,1),\ldots,(m,m)$.  To simplify notation, let $z_i$ denote
the variable $z_{i,i}$ in the polynomial ring $R_\A$.  Because $D(\A)$
has no $k$-cycles with $2 \leq k \leq m-1$, this implies that the 
first $m-1$ generators of $I_{\A}$ are:
\begin{eqnarray*}
F_1 & = & z_1 + z_2 + \cdots + z_m \\
F_2 & = & z_1z_2 + \cdots +z_{m-1}z_m \\
& \vdots &  \\
F_{m-1} & = & z_1z_2\cdots z_{m-1} + \cdots +z_2z_3\cdots z_m.
\end{eqnarray*}
Let $A \in Q(\A)$ be a realization that is nilpotent.
If $a_{1,1},\ldots,a_{m,m}$ are the nonzero diagonal entries, then
$\underline{a} = (a_{1,1},\ldots,a_{m,m})$ satisfies $F_i(\underline{a}) = 0$
for $i=1,\ldots,m-1$.
Because $a_{j,j} \neq 0$ for $1\leq j\leq m$, Lemma \ref{rootsofunitylemma} implies that
the field $\F$ contains all the $m^{th}$ roots of unity.
\end{proof}

\begin{corollary}\label{notpotnil}
Let $\A$ be a znz-pattern with $m \geq 2$ nonzero entries
on the diagonal, and suppose that $D(\A)$ has no $k$-cycles
with $2 \leq k \leq m$.  Then $\A$ is not potentially 
nilpotent over any $\F$. 
\end{corollary}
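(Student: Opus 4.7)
The plan is to sharpen the proof of Theorem \ref{nomcycle} by extracting the additional coefficient $F_m$ of the characteristic polynomial of $M_\A$, which the extra hypothesis forces to collapse to a single monomial, and then to invoke Lemma \ref{monomiallemma}.

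After relabelling, I may assume the nonzero diagonal entries of $\A$ lie at $(1,1),\ldots,(m,m)$, and abbreviate $z_j := z_{j,j}$. By the formula recalled in Section \ref{basic}, $F_m$ is, up to sign, a sum over collections of pairwise disjoint cycles $\gamma_1,\ldots,\gamma_p$ in $D(\A)$ with $|\gamma_1|+\cdots+|\gamma_p| = m$. Because $D(\A)$ contains no $k$-cycle for $2 \leq k \leq m$, every cycle appearing in such a collection has length $1$, so each collection consists of $m$ disjoint loops. Since the only loops of $D(\A)$ occur at the vertices $1,\ldots,m$, there is exactly one such collection, and a routine sign check (each loop contributes $(-1)^{1-1}=1$) yields $F_m = z_1 z_2 \cdots z_m$.

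This exhibits a monomial in $I_\A$, and Lemma \ref{monomiallemma} then immediately shows that $\A$ is not potentially nilpotent over any field. I do not anticipate a genuine obstacle: the entire argument reduces to reading the cycle formula for $F_m$ under the given restrictions, and verifying the trivial sign. One could equivalently bypass Lemma \ref{monomiallemma} and argue directly that for any hypothetical nilpotent realization $A \in Q(\A,\F)$ one would need $F_m(\underline{a}) = a_{1,1}a_{2,2}\cdots a_{m,m} = 0$, contradicting the fact that all diagonal entries of a realization are nonzero; but this is the same observation in a different guise.
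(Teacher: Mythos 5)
Your argument is correct and is essentially identical to the paper's proof: both observe that the absence of $k$-cycles for $2\le k\le m$ forces $F_m = z_1z_2\cdots z_m$, and then apply Lemma \ref{monomiallemma}. The extra sign check and the alternative direct evaluation you mention are fine but add nothing beyond the paper's two-line argument.
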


\begin{proof}
We  use the notation of the proof of Theorem \ref{nomcycle}. 
Because
$\A$ has no $k$-cycle with $2 \leq k \leq m$, we  have
$F_m = z_1z_2 \cdots z_m \in I_{\A}$.  Now apply Lemma \ref{monomiallemma}.
\end{proof}

Using the above theorem, we can give a infinite family $\A_n$ below
of potentially nilpotent znz-patterns. In \cite{Britz}, this family 
was demonstrated to fail to be potentially nilpotent for $\F=\R$.

\begin{theorem}\label{no2cycle}
Fix a field $\F$, and for each $n\geq 3$, let $\A_n$ denote the $n \times n$ znz-pattern
\[\A_n =
\begin{bmatrix} 
*        & *         &  0    &\cdots &\cdots&0\\
0        & *         & *    &  \ddots       & &\vdots \\
\vdots&  \ddots&\ddots       & \ddots       &\ddots&\vdots  \\
 \vdots         &            &      & \ddots         & * &0\\
0        &  0        &      &  \ddots  &*& *\\
*        & 0         &  0  &  \cdots  & 0 & * 
\end{bmatrix} %~~\mbox{with $n \geq 3$.}
\]
Then $\A_n$ is potentially nilpotent over $\F$ if and only if $\F$ contains
all the $n^{th}$ roots of unity.
\end{theorem}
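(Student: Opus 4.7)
The theorem has two directions. For the forward direction, observe that $D(\A_n)$ consists of $n$ loops (one at each vertex) together with the single simple cycle $1 \to 2 \to \cdots \to n \to 1$ of length $n$. Hence $D(\A_n)$ has no $k$-cycles for $2 \leq k \leq n-1$, and Theorem \ref{nomcycle} applied with $m = n$ immediately yields that if $\A_n$ is potentially nilpotent over $\F$, then $\F$ contains all the $n^{th}$ roots of unity. This direction is essentially free.

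For the reverse direction, the plan is to exhibit an explicit nilpotent realization. Assume $\F$ contains all the $n^{th}$ roots of unity, and let $\omega_1,\ldots,\omega_n \in \F$ denote the $n$ roots (with multiplicity) of $x^n - 1_\F$. Each $\omega_i$ is nonzero. I would then define $A \in M_n(\F)$ by setting $a_{i,i} = \omega_i$ for $1 \leq i \leq n$, $a_{i,i+1} = 1_\F$ for $1 \leq i \leq n-1$, $a_{n,1} = -1_\F$, and all other entries $0$, so that $A \in Q(\A_n,\F)$. The example in Example \ref{z7example} (where $n=3$ and $\F=\Z_7$) is precisely this construction, which gives me confidence it is correct in general.

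To show $A$ is nilpotent, by Lemma \ref{irreduciblereduction}(c) it suffices to verify that its characteristic polynomial is $x^n$. Using the cycle formula from Section \ref{basic} together with the fact that the only cycles of $D(\A_n)$ are the $n$ loops and the single $n$-cycle, each coefficient $r_k$ with $1 \leq k \leq n-1$ picks up contributions only from selections of $k$ disjoint loops, yielding $r_k = (-1)^k e_k(\omega_1,\ldots,\omega_n)$, where $e_k$ is the $k^{th}$ elementary symmetric polynomial. From $x^n - 1_\F = \prod_i (x-\omega_i)$ and Vieta's formulas, $e_k(\omega_1,\ldots,\omega_n) = 0$ for $1 \leq k \leq n-1$, so all of these $r_k$ vanish. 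For $k=n$, both the full collection of loops and the $n$-cycle contribute, giving
\[
r_n = (-1)^n \prod_{i=1}^{n} \omega_i + (-1)^n(-1)^{n-1}\Bigl(\prod_{i=1}^{n-1} a_{i,i+1}\Bigr)\, a_{n,1} = (-1)^n(-1)^{n-1} + (-1)^n(-1)^{n-1}(-1_\F),
\]
which is $-1_\F + 1_\F = 0$, since Vieta gives $\prod_i \omega_i = (-1)^{n-1}$.

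The one genuinely technical point is keeping the sign bookkeeping of the cycle formula straight; once that is done, the key insight is transparent: choose the $n^{th}$ roots of unity on the diagonal so that the loop-sums $e_k$ vanish automatically for $1 \leq k \leq n-1$, and then choose the non-loop entries $a_{i,i+1}$ and $a_{n,1}$ to satisfy the single remaining constraint $a_{n,1}\prod_{i=1}^{n-1} a_{i,i+1} = -1_\F$ that forces $r_n = 0$.
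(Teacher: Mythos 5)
Your proof is correct and takes essentially the same route as the paper: the forward direction via Theorem \ref{nomcycle}, and the reverse direction via the very same realization (roots of unity on the diagonal, $1_\F$ on the superdiagonal, $-1_\F$ in the corner), with the only difference being that the paper simply asserts this matrix is nilpotent while you verify it explicitly through the cycle formula and Vieta's relations. Your use of the roots of $x^n-1_\F$ counted with multiplicity is, if anything, slightly more robust than the paper's phrasing ``the $n$ distinct roots,'' since it also covers the case where the characteristic of $\F$ divides $n$.
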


\begin{proof}
The graph of $D(\A)$ is an $n$-cycle with a loop at each vertex.  Thus,
one direction follows immediately from Theorem \ref{nomcycle}
since $D(\A)$ has $n$ loops and no $k$-cycles for $2 \leq k \leq n-1$.  For
the converse direction, suppose that $\F$ contains all the $n^{th}$ roots
of unity.  Let $\zeta_1,\ldots,\zeta_{n-1},1_\F$ be the $n$ distinct
$n^{th}$ roots of unity.
Then the matrix
\[
A_n =
\begin{bmatrix} 
\zeta_1 & 1_\F & 0 &\cdots & \cdots&0\\
0 & \zeta_2 & 1_\F &  0      &  & 0\\
\vdots & \ddots &\ddots     &    \ddots       &\ddots &\vdots \\
  \vdots  &                 &    &\ddots                 &\ddots                  &0\\
0 &  0      &   &\ddots    &\zeta_{n-1}& 1_\F\\
-1_\F & 0   &  0  &  \cdots  & 0 & 1_\F 
\end{bmatrix}\]
is a desired realization.
\end{proof}

\begin{corollary}  Fix a prime $p$.
If $p \equiv 1 \pmod{n}$,
then $\A_n$ is potentially nilpotent over $\F = \Z_p$.
\end{corollary}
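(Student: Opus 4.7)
The plan is to reduce the corollary directly to Theorem \ref{no2cycle} and then invoke a standard fact about the multiplicative group of a finite field. By Theorem \ref{no2cycle}, the znz-pattern $\A_n$ is potentially nilpotent over $\F = \Z_p$ if and only if $\F$ contains all the $n^{th}$ roots of unity, i.e.\ the polynomial $x^n - 1_\F$ splits into $n$ distinct linear factors over $\F$. So the entire task is to show that if $p \equiv 1 \pmod{n}$, then $x^n - 1_\F$ splits over $\Z_p$.

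The key classical fact I will use is that the multiplicative group $\Z_p^* = \Z_p \setminus \{0\}$ is cyclic of order $p - 1$. The hypothesis $p \equiv 1 \pmod{n}$ means that $n$ divides $p - 1$, so the cyclic group $\Z_p^*$ contains a (unique) cyclic subgroup $H$ of order $n$. Every element $a \in H$ satisfies $a^n = 1_\F$ by Lagrange's theorem, hence is a root of $x^n - 1_\F$. This produces $n$ distinct roots of $x^n - 1_\F$ inside $\Z_p$, and since a polynomial of degree $n$ over a field has at most $n$ roots, these are all of them. Therefore $x^n - 1_\F$ factors into $n$ linear forms over $\Z_p$, which is exactly the definition of $\Z_p$ containing all the $n^{th}$ roots of unity.

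With both directions in place, the corollary follows immediately: under the hypothesis $p \equiv 1 \pmod n$, Theorem \ref{no2cycle} guarantees that $\A_n$ is potentially nilpotent over $\Z_p$. There is no serious obstacle in the argument; the only subtlety is the appeal to the cyclicity of $\Z_p^*$, which is a standard result about finite fields. An alternative formulation that avoids quoting cyclicity explicitly would be to observe that Fermat's little theorem gives $a^{p-1} = 1_\F$ for all $a \in \Z_p^*$, so if $n \mid p-1$ we have $(a^{(p-1)/n})^n = 1_\F$, and then count roots; but the cyclic-group argument is cleaner.
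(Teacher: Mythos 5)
Your proof is correct and follows essentially the same route as the paper: reduce to Theorem~\ref{no2cycle} and use the fact that $p \equiv 1 \pmod{n}$ forces $\Z_p$ to contain all the $n^{th}$ roots of unity. The only difference is that the paper cites this fact from Lidl--Niederreiter, while you prove it directly via the cyclicity of $\Z_p^*$, which is a fine substitution.
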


\begin{proof}  When $p \equiv 1 \pmod{n}$, then by \cite[Theorem 2.4]{LN},
the field $\Z_p$ contains all the $n^{th}$ roots of unity.  Now apply
the Theorem~\ref{no2cycle}.
\end{proof}

\begin{example}
Theorem \ref{no2cycle} gives a new way to explain why the pattern $\A = \A_3$
of Example \ref{converse} is not potentially nilpotent over $\R$.  
Because $D(\A)$ has three loops, but no two cycles,  if $\A$ were potentially
nilpotent over $\F$, then $\F$ must contain all the $3^{rd}$ roots of unity.
However, $\R$ does not have this property.  However, when $\F = \Z_7$,
all three roots of unity belong to $\F$.  This is the reason
why we can find a realization in Example \ref{z7example}.
\end{example}

%%%%%%%%%%%%%%%%%%%%%%%%%%%%%%%%%%%%%%%%%%%%%%%%%%%%%%%%%%%%%%%%%%%%%%%%%%%%%%%%%%%%%

\section{Potentially nilpotent matrices of small order over finite fields}\label{small}

In this section, we employ the tools of previous sections
to classify all znz-patterns $\A$ that are potentially
nilpotent over $\F$ of order two or three when $\F = \Z_p$,
with $p$ a prime number.  As a consequence of Lemma \ref{irreduciblereduction}, it
suffices to classify all znz-patterns of order two or three that are
irreducible.

We begin with the $2 \times 2$ case by showing a much stronger result:

\begin{theorem} \label{2x2case}
Let $\F$ be any field.  Then the znz-pattern
\[\A = \begin{bmatrix} * & * \\
* & * 
\end{bmatrix}\]
is the only irreducible $2 \times 2$ potentially nilpotent pattern over $\F$.
\end{theorem}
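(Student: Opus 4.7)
The plan is to carry out a short case analysis over the irreducible $2\times 2$ znz-patterns, using the simple fact (Lemma \ref{irreduciblereduction}(c)) that a $2\times 2$ nilpotent realization must have trace $0$ and determinant $0$.

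First I would reduce the number of cases. Since $\A$ is irreducible, the digraph $D(\A)$ must be strongly connected on $\{1,2\}$, which forces both arcs $(1,2)$ and $(2,1)$ to be present, i.e.\ $(\A)_{1,2}\neq 0$ and $(\A)_{2,1}\neq 0$. Thus the only freedom is in the two diagonal entries, giving exactly four candidates for $\A$, parametrized by which of $(\A)_{1,1}$, $(\A)_{2,2}$ equal $*$.

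Next I would rule out the three non-full candidates by a direct calculation on a putative realization $A=\begin{bmatrix}a&b\\c&d\end{bmatrix}\in Q(\A,\F)$, where $b,c\neq 0$. If exactly one of the diagonal entries of $\A$ is $*$ (say $(\A)_{1,1}=*$, $(\A)_{2,2}=0$), then any realization has $d=0$ and $a\neq 0$, so $\operatorname{tr}(A)=a\neq 0$, contradicting Lemma \ref{irreduciblereduction}(c); the symmetric case is identical. If both diagonal entries of $\A$ are zero, then $a=d=0$, so $\det(A)=-bc\neq 0$, again violating Lemma \ref{irreduciblereduction}(c). Hence none of these three patterns is potentially nilpotent over $\F$.

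Finally, for the full pattern $\A$ with all four entries $*$, I would simply exhibit a nilpotent realization that works over every field, for example
\[
A \;=\; \begin{bmatrix} 1_\F & 1_\F \\ -1_\F & -1_\F \end{bmatrix},
\]
whose trace and determinant are both $0$, so $A^2=0$, and whose zero-nonzero pattern is $\A$. This completes the classification. The argument is essentially routine; the only thing to be careful about is to note that the construction in the last step uses nothing beyond $1_\F\neq 0$, so no hypothesis on $\operatorname{char}(\F)$ or on the presence of roots of unity is needed, in sharp contrast to the $3\times 3$ analysis carried out in the next theorem.
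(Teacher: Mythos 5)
Your proposal is correct and takes essentially the same route as the paper: the same enumeration of the four irreducible $2\times 2$ patterns (both off-diagonal entries forced by strong connectivity) and the same realization $\begin{bmatrix} 1_\F & 1_\F \\ -1_\F & -1_\F \end{bmatrix}$. The only cosmetic difference is that you eliminate the three non-full patterns by direct trace/determinant computations, while the paper invokes Corollary \ref{corsaturation}; since that corollary is itself proved by observing that the trace or determinant is a monomial, the underlying argument is identical.
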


\begin{proof}
The only irreducible $2 \times 2$ znz-patterns are 
 \[\begin{bmatrix} 0 & * \\
* & 0
\end{bmatrix}, \
\begin{bmatrix} * & * \\
* & 0
\end{bmatrix},  \
\begin{bmatrix} 0 & * \\
* & *
\end{bmatrix}, \ \mbox{\rm{and}} \
\begin{bmatrix} * & * \\
* & * 
\end{bmatrix}.
\]
The first three patterns patterns cannot be potentially nilpotent over $\F$
by Corollary \ref{corsaturation}.  The matrix
\[\begin{bmatrix} 1_{\F} & 1_{\F} \\
-1_{\F} & -1_{\F} 
\end{bmatrix}.
\]
is a desired realization of $\A$.
\end{proof}

The following lemma 
is used to shorten some of the cases
in the next theorem.

\begin{lemma}  \label{specialcases}
Let $\A$ be an irreducible $n \times n$ znz-pattern.
Let $D(\A)$ be the associated digraph.
\begin{enumerate}
\item[(a)] If $\F = \Z_2$ and $D(\A)$ has an odd number of  loops,
then $\A$ is not potentially nilpotent over $\F$.
\item[(b)] If $\F = \Z_2$ and $D(\A)$ has exactly two loops
and two $2$-cycles, then $\A$ is not potentially nilpotent
over $\F=\Z_2$.
\item[(c)] The only solutions to the equation $x+y+z = 0$ with $x,y,z \in \Z_3$
and $x$, $y$, $z$ nonzero are $(1_\F,1_\F,1_\F)$ and $(2_\F,2_\F,2_\F)$.
\end{enumerate}
\end{lemma}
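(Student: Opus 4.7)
The plan is to exploit the very restricted nature of $\Z_2$ and $\Z_3$ by computing the low-degree coefficients $F_1, F_2$ of the characteristic polynomial of $M_{\A}$ directly on realizations, using the cycle formula from Section~\ref{basic}. Throughout (a) and (b) the key observation is that $\Z_2^\ast = \{1_\F\}$, so \emph{any} realization $A \in Q(\A,\Z_2)$ is the same vector $\underline{a}=(1_\F,\ldots,1_\F)$ of all ones. By Lemma~\ref{irreduciblereduction}(c) and the discussion preceding Theorem~\ref{characterization}, if $\A$ were potentially nilpotent over $\Z_2$ then each $F_i(\underline{a})$ would have to vanish, so in both parts I reduce to a mod-$2$ counting argument on certain cycles of $D(\A)$.

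For part (a), I simply use $F_1 = \operatorname{tr}(M_\A) = \sum z_{i,i}$, where the sum is over vertices with loops. Evaluated at the all-ones vector over $\Z_2$, $F_1(\underline{a})$ equals the number of loops, reduced mod $2$. If this number is odd, $F_1(\underline{a}) = 1_\F \neq 0$, so no realization is nilpotent. For part (b), I invoke the cycle formula to read off
\[
F_2 \;=\; \sum_{\{i,j\}\subset L} z_{i,i}z_{j,j} \;-\; \sum_{\text{2-cycles }(i,j)} z_{i,j}z_{j,i},
\]
where $L$ is the loop set, since the terms of $F_2$ correspond to composite cycles of total length $2$, namely either a pair of disjoint loops or a single $2$-cycle. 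With $|L|=2$ there is exactly one loop-pair term, and with two $2$-cycles there are two $2$-cycle terms. Evaluating at $\underline{a}=(1_\F,\ldots,1_\F)$ over $\Z_2$ and using $-1_\F = 1_\F$, I obtain $F_2(\underline{a}) = 1_\F + 1_\F + 1_\F = 1_\F \neq 0$, contradicting nilpotence.

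For part (c), this is a purely finite check in $\Z_3$. Since $\Z_3^\ast = \{1_\F, 2_\F\}$, there are only $2^3 = 8$ triples $(x,y,z)$ to test. A quick enumeration (or the observation that $1_\F + 1_\F + 2_\F = 1_\F \neq 0$ and $1_\F + 2_\F + 2_\F = 2_\F \neq 0$, so any triple containing both $1_\F$ and $2_\F$ fails) shows the only surviving triples are the homogeneous ones $(1_\F,1_\F,1_\F)$ and $(2_\F,2_\F,2_\F)$, both of which sum to $0$.

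The most delicate step is getting the signs and the combinatorics of $F_2$ right in (b), but the cycle formula in Section~\ref{basic} and the fact that $-1_\F = 1_\F$ in $\Z_2$ collapse the argument; there is no genuine obstacle beyond careful bookkeeping. Parts (a) and (c) are essentially one-line verifications once the mod-$2$ and mod-$3$ arithmetic is invoked.
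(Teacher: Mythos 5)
Your proposal is correct and follows essentially the same route as the paper: parts (a) and (b) evaluate the coefficients $F_1$ (the trace) and $F_2$ (loop-pair plus $2$-cycle terms) of $I_{\A}$ at the unique all-ones realization over $\Z_2$ and note they equal $1_\F \neq 0$, while (c) is the same finite check in $\Z_3$. Your explicit attention to the signs in $F_2$ (irrelevant over $\Z_2$ since $-1_\F = 1_\F$) is if anything slightly more careful than the paper's write-up.
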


\begin{proof} $(a)$ Suppose that $D(\A)$ has loops at 
$(i_1,i_1),\ldots,(i_m,i_m)$ with $m =2k+1$.  Then 
$z_{i_1,i_1} + \cdots + z_{i_m,i_m}  \in I_{\A}$.  When $\F = \Z_2$,
we must have $z_{i,j} = 1_{\F}$ for all $(i,j)$.  But this
would imply that $1_\F + \cdots + 1_\F = m_\F = 0$,
which is false.

$(b)$  Suppose that the diagonal entries of $\A$ are at $(i_1,i_1)$ and $(i_2,i_2)$ 
and the two 2-cycles are $(i_3,j_3)$ and $(i_4,j_4)$.  Then
the polynomial 
\[F_2=z_{i_1,i_1}z_{i_2,i_2}+ z_{i_3,j_3}z_{j_3,i_3} + z_{i_4,j_4}z_{j_4,i_4}  \in I_{\A}.\]
If $\F = \Z_2$, then the only nonzero choice for $z_{i,j}$ is $1_{\F}$.
It then follows that $F_2$ cannot equal zero in $\F = \Z_2$.

$(c)$ This statement follows from inspection.
\end{proof}

We say that two patterns are \emph{equivalent} if they have the same digraph.

\begin{theorem} \label{3x3case}
Fix a prime $p$ and an irreducible $3 \times 3$ znz-pattern $\A$.
Then $\A$ is potentially nilpotent over $\F = \Z_p$ if and only if, up to equivalence, 
$\A$ and $p$ have one of the following forms:
\noindent\begin{enumerate}
\item[1.] $\A = \begin{bmatrix} 0 & * & 0 \\
* & 0 & * \\
0 & * & 0 
\end{bmatrix},
 \begin{bmatrix} * & * & 0 \\
* & 0 & * \\
* & 0 & *
\end{bmatrix}$,
$\begin{bmatrix} 0 & * & 0 \\
* & * & * \\
* & 0 & * 
\end{bmatrix},$  or
 $\begin{bmatrix} * & * & * \\
* & * & * \\
* & * & 0 
\end{bmatrix}$,
and $p$ arbitrary.
\item[2.] $\A = \begin{bmatrix} * & * & 0 \\
* & 0 & * \\
0 & * & *
\end{bmatrix},
 \begin{bmatrix} * & * & * \\
* & * & * \\
* & 0 & 0 
\end{bmatrix},
\begin{bmatrix} * & * & * \\
* & 0 & * \\
* & 0 & *
\end{bmatrix},$ 
$\begin{bmatrix} * & * & * \\
* & * & * \\
* & 0 & * 
\end{bmatrix},$
$\begin{bmatrix} 0 & * & * \\
* & 0 & * \\
* & * & 0 
\end{bmatrix}$, or
$\begin{bmatrix} * & * & * \\
* & * & * \\
* & * & * 
\end{bmatrix}$,
and $p \neq 2$.
\item[3.]
 $ \A= \begin{bmatrix} * & * & 0 \\
* & * & * \\
0 & * & *
\end{bmatrix}$,
$\begin{bmatrix} * & * & 0 \\
* & * & * \\
* & 0 & * 
\end{bmatrix}$, or
$\begin{bmatrix} 0 & * & * \\
* & * & * \\
* & 0 & * 
\end{bmatrix}$,
and $p \neq 2$ or $3$.
\item[4.]  $\A = \begin{bmatrix} * & * & 0 \\
0 & * & * \\
* & 0 & * 
\end{bmatrix}$ and the three roots of $(x^3-1_\F)$ are elements of $\F = \mathbb{Z}_p$.
\end{enumerate}
\end{theorem}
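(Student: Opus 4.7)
The plan is a systematic case analysis over all equivalence classes of irreducible $3\times 3$ znz-patterns, where two patterns are equivalent when they share the same digraph. By Lemma~\ref{irreduciblereduction}(b) it suffices to keep one representative from each pair $\{\A,\A^{\mathrm{T}}\}$. I would enumerate strongly connected digraphs on three vertices by the triple (number of loops, number of $2$-cycles, number of $3$-cycles); this catalogue is short. Corollary~\ref{corsaturation} immediately eliminates every pattern whose digraph has exactly one loop or exactly one transversal, dispatching many candidates over every field. The remaining candidates split into the four families listed in the theorem together with a handful of additional families which need to be ruled out over all $\Z_p$.

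Next, I would apply the following elimination tools to the patterns not in the list, or to the listed patterns over the excluded primes. Lemma~\ref{specialcases}(a) rules out over $\Z_2$ any pattern with an odd loop count, which handles case~(3) and several non-listed patterns. Lemma~\ref{specialcases}(b) dispatches case~(2) over $\Z_2$ for all patterns with two loops and two $2$-cycles. For the loopless pattern of case~(2) over $\Z_2$, every nonzero entry must equal $1_{\F}$, so a direct squaring shows non-nilpotence. For $p=3$, the key tool is Lemma~\ref{specialcases}(c): in any three-loop pattern the condition $F_1=0$ forces $z_{1,1}=z_{2,2}=z_{3,3}$, and substituting this value into $F_2=0$ and $F_3=0$ yields a short multiplicative condition on the $2$-cycle products which I would show has no nonzero solution in $\Z_3$. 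In each case the elimination produces a monomial in $I_{\A}$ via Lemma~\ref{monomiallemma}, equivalently $1\in I_{\A}:J^{\infty}$ via Theorem~\ref{saturation}.

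For each pattern and prime in cases (1)--(4) I would then exhibit an explicit nilpotent realization. The matrix in case~(4) is $\A_3$ of Theorem~\ref{no2cycle}, whose realization over $\F$ exists exactly when $x^3-1_{\F}$ splits over $\F$. For cases (1)--(3) I would write down small matrices with entries in $\{-1_{\F},1_{\F},2_{\F}\}$ and verify directly that the characteristic polynomial becomes $x^3$; for example, the first pattern of case~(1) admits the realization
\[\begin{bmatrix} 0 & 1_{\F} & 0 \\ 1_{\F} & 0 & -1_{\F} \\ 0 & 1_{\F} & 0 \end{bmatrix}\]
over every field $\F$, since this matrix has trace $0$, all $2\times 2$ principal minors summing to zero, and no transversal (hence zero determinant).

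The main obstacle is the case~(3) elimination over $\Z_3$, where Lemma~\ref{specialcases}(a) is unavailable because three is odd only modulo two. After using Lemma~\ref{specialcases}(c) to reduce to the subcase in which all three diagonal entries coincide, one must carefully track the interaction between the $2$-cycle products appearing in $F_2$ and the mixed loop-and-$2$-cycle contributions in $F_3$, and show that no nonzero assignment of the off-diagonal variables simultaneously kills both. This is the most intricate bookkeeping in the proof, and is also precisely where the computer-algebra verification $1\in I_{\A}:J^{\infty}$ from Theorem~\ref{saturation} provides a clean mechanical finish.
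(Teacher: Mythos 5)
Your overall architecture is the same as the paper's: enumerate the strongly connected digraphs on three vertices with all loop placements, kill most candidates with Corollary~\ref{corsaturation}, use Lemma~\ref{specialcases} for the prime-specific exclusions, get case~(4) from Theorem~\ref{no2cycle}, and exhibit explicit realizations for everything that survives. The genuine gap is in how you certify the exclusions at $p=2,3$. Your claim that ``in each case the elimination produces a monomial in $I_{\A}$, equivalently $1\in I_{\A}:J^{\infty}$,'' and your closing fallback that the $p=3$ exclusions in case~(3) admit a ``clean mechanical finish'' by checking $1\in I_{\A}:J^{\infty}$, are false and cannot be repaired: by Theorem~\ref{saturation}(b) such a certificate would rule out potential nilpotence over \emph{every} extension of $\Z_3$, whereas the case-(3) patterns do have nilpotent realizations over extensions. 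For the first pattern of case~(3) (tridiagonal with three loops), the matrix
\[\begin{bmatrix} -1-i & 1 & 0\\ 1 & i & 1\\ 0 & -1-i & 1\end{bmatrix}\]
over the nine-element field $\Z_3[i]$, $i^2=-1$, has trace $0$, sum of principal $2\times 2$ minors equal to $-3i=0$, and determinant $-3=0$, so it is a nilpotent realization with all designated entries nonzero. Hence no monomial of $R_{\A}$ lies in $I_{\A}$ in characteristic $3$, $I_{\A}:J^{\infty}$ is a proper ideal, and your proposed saturation check simply returns a negative answer and decides nothing. (The same phenomenon occurs in characteristic $2$: this pattern is potentially nilpotent over the four-element field.) The point is that Lemma~\ref{specialcases}(a)--(c) are genuinely finite-field arguments exploiting that $\Z_2$ and $\Z_3$ have very few nonzero elements; they are not reformulations of Lemma~\ref{monomiallemma} or Theorem~\ref{saturation}. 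The paper's actual finish at $p=3$ is a lex Gr\"obner basis computation over $\Z_3$ followed by enumeration of the admissible nonzero values via Lemma~\ref{specialcases}(c); your by-hand substitution into $F_2,F_3$ can replace the Gr\"obner step for the three-loop patterns, but the certificate must remain a value-enumeration over $\Z_3$, not an ideal-membership statement.

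A secondary coverage gap: the third pattern of case~(3), with loops only at $(2,2)$ and $(3,3)$, is not reached by either of your stated tools, since Lemma~\ref{specialcases}(a) needs an odd number of loops over $\Z_2$ and your $p=3$ reduction assumes three loops so that $F_1=0$ forces equal diagonal entries. Over $\Z_2$ it is excluded by Lemma~\ref{specialcases}(b) (two loops, two $2$-cycles); over $\Z_3$, one uses $F_1$ to get $z_{2,2}=-z_{3,3}$, applies Lemma~\ref{specialcases}(c) to $z_{1,2}z_{2,1}+z_{1,3}z_{3,1}+z_{3,3}^2=0$ to force $z_{1,2}z_{2,1}=z_{1,3}z_{3,1}=z_{3,3}^2=1_\F$, and then the determinant condition collapses to $z_{1,2}z_{2,3}z_{3,1}=0$, a contradiction. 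Finally, be aware that entries drawn only from $\{-1_\F,1_\F,2_\F\}$ will not give realizations uniform in $p$ for all the listed patterns; as in the paper you will need field elements such as $2_\F^{-1}$ and $4_\F^{-1}$, chosen so that every required entry is nonzero for all admissible primes.
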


\begin{proof}
We do a case-by-case analysis, by considering all $3 \times 3$ irreducible znz-patterns
$\A$.  Recall that a pattern $\A$ is irreducible if and only if 
the digraph $D(\A)$ is strongly connected.  We break our proof into five cases,
where each case corresponds to one of the five non-isomorphic graphs on three vertices that
is strongly connected and with no loops.  Each case is then broke into
sub-cases, where each sub-case considers the locations of the loops.

\noindent
{\bf Case 1.}  The non-loop edges are $(1,2),(2,3)$, and $(3,1)$.
\vspace{.25cm}

\noindent
In this case, we need to consider four znz-patterns:
\[
\A_{1,1} = \begin{bmatrix} 0 & * & 0 \\
0 & 0 & * \\
* & 0 & 0 
\end{bmatrix} \
\A_{1,2} = \begin{bmatrix} * & * & 0 \\
0 & 0 & * \\
* & 0 & 0 
\end{bmatrix} \
\A_{1,3} = \begin{bmatrix} * & * & 0 \\
0 & * & * \\
* & 0 & 0 
\end{bmatrix}\ 
\A_{1,4} = \begin{bmatrix} * & * & 0 \\
0 & * & * \\
* & 0 & * 
\end{bmatrix}.\]
Patterns $\A_{1,1},\A_{1,2},$ and $\A_{1,3}$ cannot be potentially nilpotent
over any field $\F$ by Corollary \ref{corsaturation} since one of
the generators of $I_{A_{1,j}}$ for $j=1,2,3$ will be a monomial.
On the other hand, $\A_{1,4}$ is potentially nilpotent over $\F$ if and only if
the polynomial $x^3-1_\F$ factors into linear forms in $\F = \Z_p$.
This is a special case of Theorem \ref{no2cycle}.

\noindent
{\bf Case 2:} The non-loop edges are $(1,2),(2,1),(2,3)$, and $(3,2)$.
\vspace{.25cm}

\noindent
In this case, we need to consider six znz-patterns:
\[
\A_{2,1} = \begin{bmatrix} 0 & * & 0 \\
* & 0 & * \\
0 & * & 0 
\end{bmatrix}\ 
\A_{2,2} = \begin{bmatrix} * & * & 0 \\
* & 0 & * \\
0 & * & 0 
\end{bmatrix} \
\A_{2,3} = \begin{bmatrix} 0 & * & 0 \\
* & * & * \\
0 & * & 0 
\end{bmatrix}\]\[
\A_{2,4} = \begin{bmatrix} * & * & 0 \\
* & * & * \\
0 & * & 0 
\end{bmatrix} \
\A_{2,5} = \begin{bmatrix} * & * & 0 \\
* & 0 & * \\
0 & * & *
\end{bmatrix}\ 
 \A_{2,6} = \begin{bmatrix} * & * & 0 \\
* & * & * \\
0 & * & *
\end{bmatrix}.\]
We can eliminate patterns $\A_{2,2},\A_{2,3}$ and $\A_{2,4}$ as being
potentially nilpotent over any $\F$ by using Corollary
\ref{corsaturation}.  Pattern $\A_{2,1}$ is potentially
nilpotent over any $\F$ since $1_\F,-1_\F \in \F$ and 
\[ \begin{bmatrix} 0 & 1_{\F} & 0 \\
1_\F & 0 & 1_\F \\
0 & -1_\F & 0 
\end{bmatrix}\] 
is a desired realization.

Using Theorem \ref{saturation} and CoCoA, we can show that the 
pattern\footnote[3]{The pattern $\A_{2,5}$ is the antipodal
tridiagonal pattern $T_3$  studied in \cite{DJOvdD}.} $\A_{2,5}$
is not potentially nilpotent over $\F = \Z_2$
(or alternatively, see Examples \ref{examplenotation} and \ref{examplesaturation}).  If $p \neq 2$,
then $\A$ is potentially nilpotent over $\F = \Z_p$ since
\[ \begin{bmatrix} 1_{\F} & -2^{-1}_{\F} & 0 \\
1_\F & 0 & 2^{-1}_\F \\
0 & -1_\F & -1_{\F} 
\end{bmatrix}\] 
is a desired realization.  

The pattern $\A_{2,6}$ is not potentially nilpotent over 
$\F = \Z_2$ by Lemma \ref{specialcases} (a).
Also,
$\A_{2,6}$ is not potentially nilpotent over $\F = \Z_3$.  We can show this by
calculating the Gr\"obner basis of $I_{\A_{2,6}}$ in the ring
$R = \Z_{3}[z_{1,1},z_{1,2},z_{2,1},z_{2,2},z_{2,3},z_{3,2},z_{3,3}]$:
\[\{z_{1,1} + z_{2,2} + z_{3,3},\hspace{.2cm} 
z_{1,2}z_{2,1} + z_{2,2}^2 + z_{2,3}z_{3,2} + z_{2,2}z_{3,3} + z_{3,3}^2,\hspace{.2cm} 
z_{2,2}z_{2,3}z_{3,2} - z_{2,3}z_{3,2}z_{3,3} + z_{3,3}^3\}.\]
Since $z_{1,1}+z_{2,2}+z_{3,3} = 0$, by Lemma \ref{specialcases} (c),
we need only consider the cases that $z_{1,1}=z_{2,2} = z_{3,3} = 1_\F$
or they all equal $2_\F$.
In either case, solving for nonzero roots of the last polynomial of the Gr\"obner basis we get
$z_{2,3}z_{3,2}-z_{2,3}z_{3,2} + 1_\F = 0$ or $2_\F z_{2,3}z_{3,2}-2_\F z_{2,3}z_{3,2} + 2_\F = 0$,
neither of which has a solution in $\Z_3$.  So, this pattern is not
potentially nilpotent of $\Z_3$.  On the other hand,
if $p \neq 3$,
then
\[ \begin{bmatrix} 2_\F & 2_\F & 0 \\
-4_\F & -3_\F & 1_\F \\
0 & 1_\F & 1_\F
\end{bmatrix}\]
is a desired realization. 

\noindent
{\bf Case 3.}  The non-loop edges are $(1,2),(2,1),(2,3)$, and $(3,1)$.
\vspace{.25cm}

\noindent
We now need to consider eight znz-patterns:
\[
\A_{3,1} = \begin{bmatrix} 0 & * & 0 \\
* & 0 & * \\
* & 0 & 0 
\end{bmatrix}\ 
\A_{3,2} = \begin{bmatrix} * & * & 0 \\
* & 0 & * \\
* & 0 & 0 
\end{bmatrix}\ 
\A_{3,3} = \begin{bmatrix} 0 & * & 0 \\
* & * & * \\
* & 0 & 0 
\end{bmatrix} \
\A_{3,4} = \begin{bmatrix} 0 & * & 0 \\
* & 0 & * \\
* & 0 & * 
\end{bmatrix}\]\[
\A_{3,5} = \begin{bmatrix} * & * & 0 \\
* & * & * \\
* & 0 & 0 
\end{bmatrix} \
\A_{3,6} = \begin{bmatrix} * & * & 0 \\
* & 0 & * \\
* & 0 & *
\end{bmatrix} \
\A_{3,7} = \begin{bmatrix} 0 & * & 0 \\
* & * & * \\
* & 0 & * 
\end{bmatrix} \
\A_{3,8} = \begin{bmatrix} * & * & 0 \\
* & * & * \\
* & 0 & * 
\end{bmatrix}.
\]
Matrices $\A_{3,j}$ for $j=1,\ldots,5$ are not potentially nilpotent
over any field by Corollary \ref{corsaturation}.
The matrices $\A_{3,6}$  and $\A_{3,7}$ are potentially nilpotent over any field $\F$
with realizations
\[\begin{bmatrix} -1_\F & -1_\F & 0 \\
1_\F & 0 & 1_\F \\
1_\F & 0 & 1_\F
\end{bmatrix}~~\mbox{and}~~
\begin{bmatrix} 0 & -1_\F & 0 \\
1_\F & -1_\F & 1_\F \\
1_\F & 0 & 1_\F
\end{bmatrix}
\]
respectively.  Finally, the pattern $\A_{3,8}$ cannot be potentially
nilpotent over $\Z_2$ by Lemma \ref{specialcases} (a).  Also,
this pattern is not nilpotent over $\Z_3$;  again, we use
a Gr\"obner basis of $I_{\A_{3,8}}$:
\[\{z_{1,1} + z_{2,2} + z_{3,3}, \hspace{.2cm}z_{1,2}z_{2,3} + z_{2,2}^2 + z_{2,2}z_{3,3} + z_{3,3}^2,
\hspace{.2cm} 
z_{1,2}z_{2,3}z_{3,1} + z_{3,3}^3, \]\[
z_{2,2}^2z_{2,3}z_{3,1} + z_{2,2}z_{2,3}z_{3,1}z_{3,3} + z_{2,3}z_{3,1}z_{3,3}^2 - z_{2,1}z_{3,3}^3\}.\]
By Lemma \ref{specialcases} (c), the first polynomial implies $z_{1,1} = z_{2,2} = z_{3,3} = 1_\F$
or $2_\F$.  In the first case, the second polynomial
reduces to $z_{1,2}z_{2,3} + 1_\F + 1_\F + 1_\F = z_{1,2}z_{2,3} $ which is  nonzero
 in $\Z_3$.  Similarly, in the second case, the second polynomial equation
becomes $z_{1,2}z_{2,3} + 4_\F + 4_\F + 4_\F = z_{1,2}z_{2,3} \neq 0$. % which again as no nonzero solution. 
 If $p \neq 2,3$,
this pattern is potentially nilpotent
with realization
\[\begin{bmatrix} -2_\F & -1_\F & 0 \\
3_\F & 1_\F & 1_\F \\
1_\F & 0 & 1_\F
\end{bmatrix}.
\]

\noindent
{\bf Case 4.}  The non-loop edges are $(1,2),(2,1),(2,3),(1,3)$, and $(3,1)$.
\vspace{.25cm}

\noindent
We need to consider eight znz-patterns:
\[
\A_{4,1} = \begin{bmatrix} 0 & * & * \\
* & 0 & * \\
* & 0 & 0 
\end{bmatrix}\ 
\A_{4,2} = \begin{bmatrix} * & * & * \\
* & 0 & * \\
* & 0 & 0 
\end{bmatrix}\ 
\A_{4,3} = \begin{bmatrix} 0 & * & * \\
* & * & * \\
* & 0 & 0 
\end{bmatrix} \ 
\A_{4,4} = \begin{bmatrix} 0 & * & * \\
* & 0 & * \\
* & 0 & * 
\end{bmatrix}\]\[
\A_{4,5} = \begin{bmatrix} * & * & * \\
* & * & * \\
* & 0 & 0 
\end{bmatrix}\ 
\A_{4,6} = \begin{bmatrix} * & * & * \\
* & 0 & * \\
* & 0 & *
\end{bmatrix}\ 
\A_{4,7} = \begin{bmatrix} 0 & * & * \\
* & * & * \\
* & 0 & * 
\end{bmatrix} \
\A_{4,8} = \begin{bmatrix} * & * & * \\
* & * & * \\
* & 0 & * 
\end{bmatrix}.
\]
We can use Corollary \ref{corsaturation} to eliminate the znz-patterns
$\A_{4,j}$ for $j=1,\ldots,4$.  The patterns $\A_{4,j}$ for $j=5,\ldots,8$
fail to be potentially nilpotent over $\F=\Z_2$ by Lemma \ref{specialcases}.
Indeed, the first three are eliminated by part $(b)$, while the last
is eliminated by $(a)$.

The matrices $\A_{4,5}$ and $\A_{4,6}$ are potentially nilpotent
over any $\F = \Z_p$ with $p \neq 2$ with realizations
\[ 
\begin{bmatrix} 1_\F & 1_\F & 1_\F \\
1_\F & -1_\F & -1_\F \\
2_\F & 0 & 0 
\end{bmatrix}
~~\mbox{and}~~
\begin{bmatrix} 1_\F & 1_\F & 1_\F \\
1_\F & 0 & 2_\F^{-1} \\
-2_\F & 0 & -1_\F
\end{bmatrix}
~~\mbox{respectively.}\]

When $p=3$, two of the polynomials in the Gr\"obner basis of $I_{\A_{4,7}}$
are $z_{1,2}z_{2,1} + z_{3,1}z_{1,3} + z_{3,3}^2$ and $z_{1,2}z_{2,3}z_{3,1} - z_{1,3}z_{3,1}z_{3,3} + z_{3,3}^3$.
By Lemma \ref{specialcases} (c), the
first polynomial can only equal zero if $z_{1,2}z_{2,1} = z_{3,1}z_{1,3} = z_{3,3}^2$ in $\Z_3$.
Since $z_{3,3} \neq 0$, we will always have $z_{3,3}^2 = 1_\F$ in $\Z_3$.
Hence $z_{1,3}z_{3,1}=1$ and the second polynomial reduces to $z_{1,2}z_{2,3}z_{3,1}\neq 0$
So, $\A_{4,7}$
is not potentially nilpotent over $\Z_3$.  However, when $p \neq 3$,
we have the realization
\[\begin{bmatrix}
0 & -2_\F & 1_\F \\
1_\F & -1_\F & 3_{\F}\cdot 2_{\F}^{-1} \\
1_\F & 0 & 1_\F
\end{bmatrix}.
\]

The last pattern $\A_{4,8}$ is potentially nilpotent over $\Z_p$ for any 
prime $p \geq 3$ with realization:
\[\begin{bmatrix}
-2_{\F}& -4_\F & 1_\F \\
1_\F & 1_\F & 4_{\F}^{-1} \\
1_\F & 0 & 1_\F
\end{bmatrix}.
\]
\noindent
{\bf Case 5.}  The non-loop edges are $(1,2),(2,1),(3,2),(2,3),(1,3)$, and $(3,1)$.
\vspace{.25cm}

\noindent
We now need to consider the remaining four irreducible znz-patterns:
\[
\A_{5,1} = \begin{bmatrix} 0 & * & * \\
* & 0 & * \\
* & * & 0 
\end{bmatrix}\ 
\A_{5,2} = \begin{bmatrix} * & * & * \\
* & 0 & * \\
* & * & 0 
\end{bmatrix} \ 
\A_{5,3} = \begin{bmatrix} * & * & * \\
* & * & * \\
* & * & 0 
\end{bmatrix} \
\A_{5,4} = \begin{bmatrix} * & * & * \\
* & * & * \\
* & * & * 
\end{bmatrix}.\]
Pattern $\A_{5,2}$ is not potentially nilpotent over any $\F$ by Corollary
\ref{corsaturation}.  Also, by Lemma \ref{specialcases} (a), the pattern
$\A_{5,4}$ is not potentially nilpotent over $\F = \Z_2$.

For the pattern $\A_{5,1}$, we have $z_{1,2}z_{2,1} + z_{1,3}z_{1,3}+z_{2,3}z_{3,2} \in I_{\A_{5,1}}$.
This has no nonzero solution in $\Z_2$. 
When $p \neq 2$, one can use the realization:
\[\begin{bmatrix}
0 & -1_\F & 1_\F \\
4_\F & 0 & 2_\F \\
2_\F & 1_\F & 0 
\end{bmatrix}.
\]
The pattern $\A_{5,3}$ is potentially nilpotent over any $\F = \Z_p$ with
realization:
\[\begin{bmatrix}
1_\F & 1_\F & 1_\F \\
-1_\F & -1_\F & -1_\F \\
1_\F & 1_\F & 0 
\end{bmatrix}.
\]
Finally, for any $p \geq 3$, the pattern $\A_{5,4}$ is potentially nilpotent over $\F = \Z_p$;
indeed, one such realization is
\[\begin{bmatrix}
1_\F & 1_\F & 1_\F \\
1_\F & 1_\F & 1_\F \\
-2_\F & -2_\F & -2_\F 
\end{bmatrix}.
\]
\end{proof}

\begin{remark}  We point out three interesting
facts that arise from this classification.  
First, all the irreducible patterns that are not potentially
nilpotent over any $\Z_p$ are in fact not potentially nilpotent
over any field $\F$.  As a consequence, to determine which
irreducible patterns are potentially nilpotent over $\F = \R$,
it suffices to consider only the irreducible patterns
that appear in the statement of Theorem \ref{3x3case}.  Moreover,
the realizations given in the proof of Theorem \ref{3x3case} show
that all of these irreducible patterns are potentially nilpotent over $\R$
{\it except} the pattern in Case $4$. % because the three root of $(x^3-1)$ are not in $\R$. 
Second, if $\A$ is potentially nilpotent
over $\Z_p$, it does not necessarily follow that any superpattern of $\A$
continues to be potentially nilpotent over $\Z_p$.    And third, notice that {\it none}
of the cases in Case 4 are potentially nilpotent over $\Z_2$.    This lends itself to a natural question:  what digraphs $D(\A)$ have the property that
 $\A$ fails to be potentially nilpotent
over some field $\F$, regardless of the placement of the loops?
\end{remark}

%%%%%%%%%%%%%%%%%%%%%%%%%%%%%%%%%%%%%%%%%%%%%%%%%%%%%%%%%%%%%%%%%%%%%%%%%%%%%%    

\end{document}